\documentclass[11pt]{amsart}
\usepackage{geometry}                
\geometry{letterpaper}                   
\usepackage{graphicx,comment}
\usepackage{amssymb,amscd}
\DeclareGraphicsRule{.tif}{png}{.png}{`convert #1 `dirname #1`/`basename #1 .tif`.png}

\sloppy
\newtheorem{thm}{Theorem}

\newtheorem{cor}{Corollary}
\newtheorem{remark}{Remark}
\newtheorem{conj}{Conjecture}

\newtheorem*{NB*}{\textbf{N.B}}
\newtheorem{question}[thm]{Question}

\newtheorem{lemma}{Lemma}

\theoremstyle{definition}
\newtheorem{defn}{Definition}

\newcommand{\factor}[2]{{\raise0.7ex\hbox{$#1$} \!\mathord{\left/ {\vphantom {#1 {#2}}}\right.\kern-\nulldelimiterspace}
\!\lower0.7ex\hbox{${#2}$}}}
\newcommand{\BN}{\mathbb{N}}
\newcommand{\BZ}{\mathbb{Z}}
\newcommand{\BQ}{\mathbb{Q}}
\newcommand{\GG}{\ensuremath{\mathbb{G}}}

\newcommand{\mT}{\mathcal{T}}

\DeclareMathOperator{\gph}{gph}
\DeclareMathOperator{\St}{Star}

\DeclareMathOperator{\comp}{comp}
\DeclareMathOperator{\Aut}{Aut}

\title[Embeddability and quasi-isometric classification of pc groups]{Embeddability and quasi-isometric classification of partially commutative groups}
\author{Montserrat Casals-Ruiz}
\address{Departamento de Matemáticas, Universidad del País Vasco/Euskal Herriko Unibertsitatea, Barrio Sarriena s/n, Leioa, 48940 Bizkaia, Spain}
\email{montserrat.casals@ehu.eus}

\thanks{The author is supported by the Juan de la Cierva Programme and partly supported by the ERC grant PCG-336983, the Spanish Government grant MTM2014-53810-C2-2-P and Basque Government grant IT753-13.}

\keywords{Partially commutative group, right-angled Artin group, embeddability, quasi-isometric classification}

\begin{document}
\subjclass[2010]{20F36, 20F65, 20A15, 20F69}

\begin{abstract}

The main goal of this note is to suggest an algebraic approach to the  quasi-isometric classification of partially commutative groups (alias right-angled Artin groups). More precisely, we conjecture that if the partially commutative groups $\GG(\Delta)$ and $\GG(\Gamma)$ are quasi-isometric, then $\GG(\Delta)$ is a (nice) subgroup of $\GG(\Gamma)$ and vice-versa. We show that the conjecture holds for all known cases of quasi-isometric classification of partially commutative groups, namely for the classes of $n$-tress and atomic graphs.

As in the classical Mostow rigidity theory for irreducible lattices, we relate the quasi-isometric rigidity of the class of atomic partially commutative groups with the algebraic rigidity, that is with the co-Hopfian property of their $\BQ$-completions.
\end{abstract}
\maketitle
\section{Introduction}

A finitely generated group can be considered as a geometric object when endowed with a word metric. Gromov observed that this metric is in fact unique up to quasi-isometry and suggested the study of the rigidity problem, that is when a finitely generated group is quasi-isometric to a given one, and of the classification problem, i.e. when two groups (in a given class) are quasi-isometric.

In this paper, we focus on the question of  quasi-isometric classification of partially commutative groups (also known as right-anlged Artin groups) and its connection with the embeddability problem. Recall that a \emph{partially commutative group} (or a pc group, for short) is a finitely presented group $\GG(\Gamma)$ which can be described by a finite simplicial graph $\Gamma$, the commutation graph, in the following way: the vertices of $\Gamma$ are in bijective correspondence with the generators of $\GG(\Gamma)$ and the set of defining relations of $\GG(\Gamma)$ consists of  commutation relations, one for each pair of generators connected by an edge in $\Gamma$.

The quasi-isometric classification of pc groups has been previously considered by Behrstock, Januszkiewic and Neumann; Bestvina, Kleiner and Sageev and more recently by Huang. Although the results on classification are partial, they already exhibit a complex behaviour: on the one hand, some quasi-isometry classes contain many pc groups while others exhibit some type of rigidity. Furthermore, the techniques used to approach the problem in these cases do not seem to be amenable to address the general classification problem.

\smallskip

The main goal of this note is to suggest an algebraic approach towards the quasi-isometric classification of pc groups. More precisely we study (a stronger version of) the following question.

\begin{question}\label{ques:qi->emb}
If the pc groups $\GG(\Gamma)$ and $\GG(\Delta)$ are quasi-isometric, does this imply that $\GG(\Gamma) < \GG(\Delta)$ and $\GG(\Delta) < \GG(\Gamma)$?
\end{question}

Note that Question \ref{ques:qi->emb} has a positive answer if one strengthens the relation to commensurability, that is if $\GG(\Delta)$ and $\GG(\Gamma)$ are commensurable, then $\GG(\Delta) < \GG(\Gamma)$ and $\GG(\Gamma)<\GG(\Delta)$, see Lemma \ref{lem:commensurable}. Although this observation makes Question \ref{ques:qi->emb} natural, we will refine the statement and require, not only embeddability between the groups, but rather ``nice" embeddability in terms of the extension graphs. The extension graph $\Gamma^e$ of a graph $\Gamma$ was introduced in \cite{KK} to study the Embeddability Problem for pc groups, that is to determine when a pc group is a subgroup of another one. It is defined as follows: vertices of $\Gamma^e$ are in one-to-one correspondence with conjugates of generators of $\GG(\Gamma)$, that is 
$$
V(\Gamma^e)=\{ g^{-1} x g \in \GG(\Gamma) \mid x\in V(\Gamma), g\in \GG(\Gamma)\}
$$ 
and there is an edge in $\Gamma^e$ whenever the elements associated to the corresponding vertices commute in the group, i.e. 
$$
E(\Gamma^e)= \{ (u, v) \mid [u,v]=1 \hbox{ in } \GG(\Gamma)\}.
$$

In \cite{KK} it is shown that if $\Delta$ is an induced subgraph of the extension graph $\Gamma^e$, then $\GG(\Delta)$ embeds in $\GG(\Gamma)$ and that, under some additional conditions on the graphs, the converse also holds. These results suggested that the extension graph could be the graph-theoretical tool to determine when a pc group is a subgroup of another one. However, we show in \cite{CDK} that in general the extension graph is not enough to characterise embeddability: there are pc groups $\GG(\Delta)$ and $\GG(\Gamma)$ for which there exists an embedding from $\GG(\Delta)$ to $\GG(\Gamma)$ but $\Delta$ is not an induced subgraph of $\Gamma^e$. However, we believe that the extension graph may be helpful for the quasi-isometric classification of pc groups. More formally, we suggest the following conjecture.

\begin{conj}\label{conj:qi->EGE}
Let $\Delta$ and $\Gamma$ be simplicial graphs. If $\GG(\Delta)$ and $\GG(\Gamma)$ are quasi-isometric, then $\Delta < \Gamma^e$ and $\Gamma < \Delta^e$.
\end{conj}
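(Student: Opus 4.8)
Since this is a conjecture rather than a theorem, the realistic plan is to verify it in the two families where the quasi-isometric classification of pc groups is currently available --- pc groups over trees and over atomic graphs --- and, in the more delicate atomic case, to organise the argument so that it exhibits the rigidity as an instance of an algebraic phenomenon, the co-Hopfian property of $\BQ$-completions, that one may hope eventually to exploit beyond the atomic world. Throughout, the basic observation to keep in mind is that $\Gamma$ is always an induced subgraph of $\Gamma^e$ (distinct generators of $\GG(\Gamma)$ commute precisely when they are adjacent), so that the conjecture is automatic whenever being quasi-isometric forces the defining graphs to be isomorphic.

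\textbf{Trees.} The plan here is to combine the quasi-isometric classification of Behrstock, Januszkiewicz and Neumann with the universality of extension graphs of trees from \cite{KK}. First reduce to connected graphs and stratify trees by diameter. Diameter $\le 1$ gives $\BZ$ or $\BZ^2$, which are quasi-isometrically rigid, so there is nothing to prove. Diameter $2$ gives the stars $S_n$ with $\GG(S_n)\cong F_n\times\BZ$; a short computation identifies $S_n^e$ with the infinite star (one central vertex joined to a countable edgeless graph), independently of $n$, so every $S_m$ sits in $S_n^e$ as an induced subgraph and this stratum is settled. Finally, for trees of diameter at least $3$, Behrstock--Januszkiewicz--Neumann place all the associated pc groups in a single quasi-isometry class, while \cite{KK} show that the extension graph of any such tree contains every finite forest, in particular every finite tree, as an induced subgraph; hence $\Delta<\Gamma^e$ and $\Gamma<\Delta^e$ for any two trees $\Delta,\Gamma$ in that class.

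\textbf{Atomic graphs.} By Bestvina--Kleiner--Sageev, atomic pc groups are quasi-isometric if and only if they are isomorphic, if and only if the defining graphs are isomorphic, so by the observation above the conjecture holds trivially. The point of the proposed approach is instead to produce an argument that does not rely on this equivalence as a black box: (i) show that a quasi-isometry $\GG(\Gamma)\to\GG(\Delta)$ of atomic pc groups induces an injective graph homomorphism $\Gamma\to\Delta^e$, using the coarse structure of the CAT(0) cube complexes (quasiflats, contracting geodesics and the pattern they define); (ii) promote this to a subgroup embedding $\GG(\Gamma)<\GG(\Delta)$ and hence, after passing to $\BQ$-completions, to $\GG(\Gamma)^{\BQ}<\GG(\Delta)^{\BQ}$; (iii) run the symmetric argument and conclude that, if these $\BQ$-completions are co-Hopfian, the two embeddings compose to an isomorphism, forcing $\Gamma\cong\Delta$. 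This displaces the whole burden onto the statement that the $\BQ$-completions of atomic pc groups are co-Hopfian, which I would attack via the lattice of parabolic subgroups and centraliser structure in $\GG(\Gamma)^{\BQ}$, showing that an injective endomorphism must send the canonical block parabolics to conjugates of themselves --- this is precisely where girth at least $5$, the absence of valence-one vertices and the absence of separating vertices are used.

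\textbf{Main obstacle.} The genuine difficulty is the general case. No coarse-geometric mechanism is currently known that converts a quasi-isometry of arbitrary pc groups into an algebraic embedding, and, worse, \cite{CDK} shows that the relation $\Delta<\Gamma^e$ is strictly stronger than embeddability of $\GG(\Delta)$ in $\GG(\Gamma)$, so even constructing embeddings would not close the gap. Within the atomic reduction the bottleneck is the co-Hopfian property of the $\BQ$-completions: this is an algebraic rigidity statement that looks to be at least as hard as the geometric rigidity it is meant to explain, and establishing it in full --- rather than just for atomic graphs --- is the natural next step.
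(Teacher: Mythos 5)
Your overall strategy --- verify the conjecture in the classes where the quasi-isometric classification is known, and note that it is automatic whenever quasi-isometry forces graph isomorphism --- is the paper's strategy, and your treatment of ordinary trees (stratify by diameter, use Behrstock--Neumann together with the universality of $T^e$ for trees of diameter at least $3$ from \cite{KK}) matches the paper's elementary section. But there is a genuine gap: you omit the class of $n$-trees for $n\ge 2$ (Definition \ref{def:ntrees}), which is the one known classification where the quasi-isometry classes are neither singletons nor ``everything of a given diameter'', so the conjecture there is neither trivial nor a consequence of universality. By \cite{BJN}, $\GG(K)$ and $\GG(K')$ are quasi-isometric iff the labelled bipartite trees $\gph(K)$ and $\gph(K')$ are bisimilar, and bisimilar complexes are in general non-isomorphic. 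The paper's main technical work is Lemma \ref{lem:weakcoverimplege}: if $\gph(\Delta)$ weakly covers $\gph(\Gamma)$ then $\Delta<\Gamma^e$, proved by an induction that realises the extra pieces of $\Delta$ inside successive doublings of $\Gamma$ along stars (Lemma \ref{lem:22kk}), choosing conjugators supported on the alphabet of the relevant $(n-1)$-face so that each double is genuinely new. Bisimilarity then yields a common weak quotient $\gph(\Lambda)$ with $\Lambda<\Gamma$ and $\Lambda<\Delta$, whence $\Delta<\Lambda^e<\Gamma^e$ and symmetrically. None of this appears in your proposal, and the $n=1$ argument does not generalise because for $n\ge 2$ the extension graph of an $n$-tree is not universal for $n$-trees.

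On the atomic case, your program (i)--(iii) inverts the paper's logic. The forward implication of the conjecture is indeed trivial there by Bestvina--Kleiner--Sageev, as you say; but the paper's actual content is the converse, proved directly and combinatorially: any induced-subgraph embedding $\Gamma\to\Gamma^e$ of an atomic graph is standard up to conjugacy and $\Aut(\Gamma)$ (via marked cycles from a minimal spanning tree and a complexity ordering on components), hence mutual embeddability forces $\Delta=\Gamma$, and co-Hopfianity of $\mathcal{G}(\Gamma,\BQ)$ and $\GG(\Gamma)^{\BQ}$ is then a corollary, not an input. Your step (i) --- extracting a graph embedding $\Gamma\to\Delta^e$ from a quasi-isometry --- is precisely the open content of the conjecture and is not available by any known mechanism, and making co-Hopfianity of the $\BQ$-completion the hypothesis rather than the conclusion leaves you needing to prove the harder statement first.
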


If Conjecture \ref{conj:qi->EGE} holds, then we would get some algebraic control on the quasi-isometry classes of pc groups. For instance, it would follow that if $\GG(\Gamma)$ is a coherent pc group and $\GG(\Delta)$ is a pc group quasi-isometric to $\GG(\Gamma)$, then $\GG(\Delta)$ is also coherent, see Remark \ref{rem:coherent}. Recall that a group is coherent if all its finitely generated subgroups are finitely presentable. Furthermore, we would also gain an algorithmic understanding of the quasi-isometry classes of pc groups. Indeed, it was proven in \cite{C} that there is an algorithm that given two finite simplicial graphs $\Delta$ and $\Gamma$, decides whether or not $\Delta$ is an induced subgraph of the extension graph $\Gamma^e$.

\medskip

In this note we show that Conjecture \ref{conj:qi->EGE} holds for all known cases of quasi-isometric classification of pc groups. The two essential cases that need to be analysed are $n$-trees and atomic graphs.

\medskip

In \cite{BN}, the authors study the problem of classification of graph manifolds and proved that a pc group $\GG(\Delta)$ is quasi-isometric to $\GG(\Gamma)$, where $\Gamma$ is a tree of diameter greater than $2$, if and only if $\Delta$ is a tree of diameter greater than $2$.

This result was further generalised by Behrstock, Januszkiewic and Neumann for the class of $n$-trees (see Definition \ref{def:ntrees}). The authors prove that given two $n$-trees $\Delta$ and $\Gamma$, the corresponding pc groups $\GG(\Delta)$ and $\GG(\Gamma)$ are quasi-isometric if and only if the underlying trees associated to $\Delta$ and $\Gamma$ satisfy a graph-theoretic relation, namely they are bisimilar (see Definition \ref{def:bisimilar}).

In a different direction, Bestvina, Kleiner and Sageev, see \cite{BKS}, introduced and studied the problem of quasi-isometric classification of pc groups for atomic graphs, that is connected graphs with no valence 1 vertices, no cycles of length less than 5 and no separating closed stars of vertices. They prove that the class of pc groups defined by atomic graphs is quasi-isometrically rigid, that is given two atomic graphs $\Delta$ and $\Gamma$, the corresponding pc groups $\GG(\Delta)$ and $\GG(\Gamma)$ are quasi-isometric if and only if $\Delta$ and $\Gamma$ are isomorphic.

\medskip

Our goal is to prove that Conjecture \ref{conj:qi->EGE} holds in the aforementioned cases.

\begin{thm} 
Let $\mathcal{C}$ be one of the following classes of graphs:
\begin{itemize}
\item Triangle-built (i.e. graphs with no induced squares and no induced paths of length more than $2$);
\item Atomic graphs;
\item $n$-trees;
\end{itemize}
and let $\Delta, \Gamma \in \mathcal C$. Then $\GG(\Delta)$ is quasi-isometric to $\GG(\Gamma)$ if and only if $\Delta < \Gamma^e$ and $\Gamma < \Delta^e$.
\end{thm}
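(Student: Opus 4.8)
The overall strategy is to split the equivalence into its two implications and, for each of the three classes, route through the already-known quasi-isometric classifications so that the remaining work is a comparison of graphs rather than of groups. One implication is essentially free for atomic graphs: by the rigidity theorem of Bestvina--Kleiner--Sageev \cite{BKS} a quasi-isometry between $\GG(\Delta)$ and $\GG(\Gamma)$ forces $\Delta\cong\Gamma$, and since every graph embeds as an induced subgraph of its own extension graph (adjacent generators commute, non-adjacent generators do not), $\Delta\cong\Gamma$ already gives $\Delta<\Gamma^e$ and $\Gamma<\Delta^e$.

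For $n$-trees and triangle-built graphs I would prove, directly and independently of the quasi-isometry classification, the purely combinatorial statement that for $\Delta,\Gamma$ in the class one has $\Delta<\Gamma^e$ and $\Gamma<\Delta^e$ if and only if the relevant combinatorial invariant agrees --- bisimilarity of the underlying trees in the sense of Definition~\ref{def:bisimilar} for $n$-trees, and the appropriate complete invariant in the triangle-built case. This requires an explicit description of $\Gamma^e$: for an $n$-tree $\Gamma$ the extension graph is again ``$n$-tree-like'', obtained by unfolding $\Gamma$ repeatedly along the closed stars of its vertices, and the $n$-trees occurring as induced subgraphs of $\Gamma^e$ are exactly those whose underlying tree sits inside that infinite unfolding; comparing unfoldings then yields bisimilarity. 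For triangle-built graphs the quasi-isometric classification is not on record beyond the tree case \cite{BN}, so it must first be established: such a $\Gamma$ is a cograph each of whose joins has a complete factor, hence $\GG(\Gamma)$ is obtained from $\BZ$ by iterated free products and direct products with free abelian groups; inducting on this decomposition, using the Papasoglu--Whyte theorem for the free-product stages and the maximal dimension of a flat (an asymptotic, hence quasi-isometry, invariant) for the abelian factors, one extracts a complete invariant and matches it against the extension-graph relation as for $n$-trees. Together with \cite{BN} and the classification of $n$-trees this settles both implications in these two classes.

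The substantial point is the remaining implication for atomic graphs: assuming $\Delta<\Gamma^e$ and $\Gamma<\Delta^e$ with $\Delta,\Gamma$ atomic, show that $\GG(\Delta)$ and $\GG(\Gamma)$ are quasi-isometric. By \cite{KK} the hypotheses give embeddings $\GG(\Delta)\hookrightarrow\GG(\Gamma)$ and $\GG(\Gamma)\hookrightarrow\GG(\Delta)$. These embeddings, being of the concrete ``vertices to conjugates of generators'' type produced in \cite{KK}, extend to the $\BQ$-completions, so one obtains $\GG_\BQ(\Delta)\hookrightarrow\GG_\BQ(\Gamma)\hookrightarrow\GG_\BQ(\Delta)$ and hence an injective endomorphism of $\GG_\BQ(\Delta)$ factoring through $\GG_\BQ(\Gamma)$. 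I would then prove that $\GG_\BQ(\Gamma)$ is co-Hopfian whenever $\Gamma$ is atomic; the composite endomorphism is then an automorphism, which forces $\GG_\BQ(\Gamma)\hookrightarrow\GG_\BQ(\Delta)$ to be an isomorphism. Since the commutation graph of an atomic pc group is recovered from its $\BQ$-completion, $\Gamma\cong\Delta$, and \cite{BKS} gives that $\GG(\Gamma)$ and $\GG(\Delta)$ are quasi-isometric.

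The main obstacle is exactly this last step: establishing that $\GG_\BQ(\Gamma)$ is co-Hopfian for atomic $\Gamma$ --- that is, ruling out proper ``$\BQ$-dilations'' --- which is where the defining conditions of atomicity (girth at least $5$, no valence-one vertices, no separating closed stars) must be used in an essential way, and which is the algebraic counterpart of the geometric rigidity of \cite{BKS}. Secondary care is needed to make precise the functoriality of the $\BQ$-completion with respect to the \cite{KK} embeddings so that the sandwich argument is legitimate, and to verify the graph-theoretic lemma that mutual induced-subgraph containment of $n$-trees in each other's extension graphs is equivalent to bisimilarity.
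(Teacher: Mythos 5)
Your overall architecture is sound and matches the paper's for two of the three classes: the triangle-built case is indeed handled by Droms' structure theorem, the de Rham/product decomposition of \cite{KKL} and Papasoglu's invariance of splittings \cite{P}, with an induction on the decomposition; and the $n$-tree case does reduce to showing that mutual containment in extension graphs is equivalent to bisimilarity, with the extension graph analysed as an iterated doubling along closed stars (Lemma 22 of \cite{KK}) --- the paper carries out exactly the two combinatorial verifications you flag (a weak covering $\gph(\Delta)\to\gph(\Gamma)$ yields an explicit embedding $\Delta<\Gamma^e$ by induction on the excess vertices, and conversely each doubling stays inside the class of $n$-trees and preserves bisimilarity, so a minimality argument closes the loop). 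These are real arguments that still have to be written, but your plan for them is the right one.

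The genuine gap is in the atomic case. You reduce everything to the claim that $\GG(\Gamma)^{\BQ}$ is co-Hopfian for atomic $\Gamma$, and you correctly identify this as ``the main obstacle'' --- but you offer no mechanism for proving it, and this claim is not a lemma one can quote: it carries essentially all of the difficulty of the theorem. In the paper the logical order is the reverse of yours: one first proves a graph-theoretic rigidity statement (Theorem \ref{lem:atomicembedding}), namely that \emph{every} induced-subgraph embedding $\varphi:\Gamma\to\Gamma^e$ of an atomic graph into its own extension graph is, up to conjugacy and an automorphism of $\Gamma$, the identity; the co-Hopfian property of $\GG(\Gamma)^{\BQ}$ is then an easy corollary (via the classification of non-abelian centralisers), not an input. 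The proof of that rigidity statement is where atomicity is actually spent: one marks a system of cycles coming from a lexicographically minimal maximal subtree, uses Lemma \ref{lem:26} (distinct conjugates of $\Gamma$ in $\Gamma^e$ meet in at most a closed star, and such stars separate $\Gamma^e$) together with the absence of separating closed stars and girth at least $5$ to show that a cycle of minimal length cannot be broken across two conjugates of $\Gamma$ without producing a strictly shorter cycle, and then runs a double induction over ``components'' of marked cycles ordered by length and a complexity tuple to force $\varphi(\Gamma)$ into a single conjugate $\Gamma^g$. Without this (or an equivalent substitute), your sandwich argument $\GG(\Delta)^{\BQ}\hookrightarrow\GG(\Gamma)^{\BQ}\hookrightarrow\GG(\Delta)^{\BQ}$ has nothing to push against. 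Note also that once the rigidity theorem is available, the detour through $\BQ$-completions is unnecessary: one composes the induced maps on extension graphs to get an embedding $\Delta\to\Delta^e$, applies rigidity to conclude $\Delta$ is a subgraph of $\Gamma$ and vice versa, and invokes \cite{BKS} only for the trivial direction, exactly as the paper does.
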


\begin{remark}
When this note was already written, Jingyin Huang published a new preprint \cite{Huang}, where he describes pc groups quasi-isometric to pc groups with finite outer automorphism group {\rm (}a class that naturally extends atomic pc groups{\rm )}. More precisely, Huang shows that if $\GG(\Gamma)$ is a pc group with finite outer automorphism group, then $\GG(\Delta)$ is quasi-isometric to $\GG(\Gamma)$ if and only if $\GG(\Gamma)$ and $\GG(\Delta)$ are commensurable if and only if $\Gamma^e$ and $\Delta^e$ are isomorphic. As a consequence, we have that {\rm Conjecture \ref{conj:qi->EGE}} also holds for pc groups with finite outer automorphism group.
\end{remark}

\bigskip

In many cases, there is a close relation between the group being quasi-isometrically and algebraically rigid, that is being co-Hopfian. For instance, in the classical case of irreducible lattices in semisimple Lie groups this relation is a consequence of Mostow rigidity.

As we mentioned above, the class of atomic pc groups is, in some sense, quasi-isometrically rigid so one can ask how far these groups are from being co-Hopfian. Recall that a group is called co-Hopfian if every injective endomorphism is an automorphism. In this direction, we study the set of injective endomorphisms for the class of atomic pc groups and show that ``up to taking roots", they are automorphisms. More precisely, we prove the following

\begin{cor}
Let $\Gamma$ be an atomic graph and $\psi: \GG(\Gamma) \to \GG(\Gamma)$ an injective endomorphism. Then, there exist $g\in \GG(\Gamma)$, $\sigma \in \Aut(\Gamma)$ and $k_v \in \BZ \setminus {0}$, $v\in V(\Gamma)$, such that
$$
\psi(v) = g^{-1}  \sigma(v)^{k_v} g, \ v\in V(\Gamma).
$$
In other words, up to conjugacy, graph automorphism and taking powers, $\psi$ is the identity endomorphism.
\end{cor}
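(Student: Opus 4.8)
The plan is to lift $\psi$ to the $\BQ$-completion $\GG(\Gamma)^{\BQ}$, invoke the algebraic rigidity of atomic partially commutative groups established above, and then descend the resulting normal form back to $\GG(\Gamma)$. Since $\GG(\Gamma)$ embeds in $\GG(\Gamma)^{\BQ}$, the composite $\GG(\Gamma)\xrightarrow{\psi}\GG(\Gamma)\hookrightarrow\GG(\Gamma)^{\BQ}$ extends, by the universal property of the completion, to an injective endomorphism $\psi^{\BQ}$ of $\GG(\Gamma)^{\BQ}$. By the results above (the co-Hopfianness of $\GG(\Gamma)^{\BQ}$ together with the description of its endomorphisms) $\psi^{\BQ}$ is an automorphism which is a product of an inner automorphism, a graph symmetry and a diagonal automorphism: this is precisely where the atomic hypotheses are used, since a domination (i.e.\ a nonstandard transvection) with $v\neq w$ would, because every vertex of an atomic graph has valence $\geq 2$, force a cycle of length $3$ or $4$ in $\Gamma$, contradicting the absence of cycles of length $<5$, while the absence of separating closed stars makes every partial conjugation inner. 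Hence there exist $\hat g\in\GG(\Gamma)^{\BQ}$, $\sigma\in\Aut(\Gamma)$ and $\lambda_{v}\in\BQ^{*}$ with
\[
\psi(v)=\hat g^{-1}\,\sigma(v)^{\lambda_{v}}\,\hat g,\qquad v\in V(\Gamma).
\]

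The exponents are integers. For a vertex $w$ let $\rho_{w}\colon\GG(\Gamma)\to\langle w\rangle\cong\BZ$ be the retraction sending all vertices $\neq w$ to $1$; it extends to a retraction $\rho_{w}^{\BQ}\colon\GG(\Gamma)^{\BQ}\to\BQ$. Applying $\rho_{\sigma(v)}^{\BQ}$ to the identity above and using that its target is abelian (so the conjugation by $\hat g$ cancels) gives $\rho_{\sigma(v)}^{\BQ}(\psi(v))=\lambda_{v}$; but $\psi(v)\in\GG(\Gamma)$, so $\lambda_{v}=\rho_{\sigma(v)}(\psi(v))\in\BZ$, and $\lambda_{v}\neq0$ since $\psi$ is injective and $v\neq1$. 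Write $k_{v}:=\lambda_{v}\in\BZ\setminus\{0\}$.

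It remains to replace $\hat g$ by an element of $\GG(\Gamma)$. Fix a vertex $v_{0}$. The element $\psi(v_{0})\in\GG(\Gamma)$ is $\GG(\Gamma)^{\BQ}$-conjugate to the vertex power $\sigma(v_{0})^{k_{0}}$; on the other hand, $\psi$ being injective, its centralizer in $\GG(\Gamma)$ contains $\psi\bigl(C_{\GG(\Gamma)}(v_{0})\bigr)\cong\BZ\times F$ with $F$ free of rank $\geq2$, and (as $\Gamma$ has girth $\geq5$) the structure of centralizers in partially commutative groups then forces $\psi(v_{0})$ to be conjugate, \emph{inside} $\GG(\Gamma)$, to a power of a single vertex; comparing abelianisations identifies that vertex and power as $\sigma(v_{0})$ and $k_{0}$. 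Conjugating $\psi$ by a suitable element of $\GG(\Gamma)$ we may therefore assume $\psi(v_{0})=\sigma(v_{0})^{k_{0}}$, so that $\hat g$ centralizes $\sigma(v_{0})$, i.e.\ $\hat g\in\langle\sigma(v_{0})\rangle^{\BQ}\times\bigl\langle\,\St(\sigma(v_{0}))\setminus\{\sigma(v_{0})\}\,\bigr\rangle^{\BQ}$, the second factor being the $\BQ$-completion of the free group on the neighbours of $\sigma(v_{0})$ (free because $\Gamma$ has no triangles). Repeating this reduction on the neighbours of $v_{0}$ inside that free group, conjugating only by elements that commute with $\sigma(v_{0})$, we reach $\psi(v)=\sigma(v)^{k_{v}}$ for every $v\in\St(v_{0})$, whence $\hat g\in\langle\sigma(v_{0})\rangle^{\BQ}$, say $\hat g=\sigma(v_{0})^{t}$ with $t\in\BQ$. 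Since $\Gamma$ is atomic it is not a star, so there is a vertex $u\notin\St(v_{0})$; then $\sigma(u)$ and $\sigma(v_{0})$ are non-adjacent, and $\psi(u)=\sigma(v_{0})^{-t}\,\sigma(u)^{k_{u}}\,\sigma(v_{0})^{t}$ lies in $\GG(\Gamma)$. Working inside the rank-$2$ free subgroup $\langle\sigma(v_{0}),\sigma(u)\rangle$ (a retract of $\GG(\Gamma)$) and its $\BQ$-completion, the normal form in free $\BQ$-exponential groups shows that this forces $t\in\BZ$, hence $\hat g\in\GG(\Gamma)$. Undoing the finitely many $\GG(\Gamma)$-inner conjugations performed along the way yields $g\in\GG(\Gamma)$ with $\psi(v)=g^{-1}\sigma(v)^{k_{v}}g$ for all $v\in V(\Gamma)$.

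I expect the last step — descending the conjugator from $\GG(\Gamma)^{\BQ}$ to $\GG(\Gamma)$ — to be the main obstacle, and within it two technical points deserve care: the ``conjugacy closedness'' of $\GG(\Gamma)$ inside its $\BQ$-completion for vertex powers (an element of $\GG(\Gamma)$ that is $\GG(\Gamma)^{\BQ}$-conjugate to $w^{k}$ is already $\GG(\Gamma)$-conjugate to $w^{k}$), and the normal-form computation in free $\BQ$-exponential groups used to show that $\sigma(v_{0})^{-t}\sigma(u)^{k_{u}}\sigma(v_{0})^{t}\in\GG(\Gamma)$ implies $t\in\BZ$. Everything else is either immediate or supplied by the material on $\BQ$-completions developed above.
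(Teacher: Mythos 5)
Your argument has a genuine circularity at its foundation. You invoke ``the co-Hopfianness of $\GG(\Gamma)^{\BQ}$ together with the description of its endomorphisms'' to conclude that $\psi^{\BQ}$ is a product of an inner automorphism, a graph symmetry and a diagonal automorphism. But in this paper the co-Hopfian property of $\GG(\Gamma)^{\BQ}$ is a \emph{consequence} of the very corollary you are proving (its proof is explicitly ``analogous to the proof of Corollary \ref{cor:injendos}''), and no Laurence--Servatius-type generating set for $\Aut(\GG(\Gamma)^{\BQ})$ --- inner automorphisms, graph symmetries, diagonal automorphisms, transvections, partial conjugations --- is established anywhere, for the completion or even for $\GG(\Gamma)$ itself. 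Your remarks about dominations forcing short cycles and separating stars making partial conjugations inner only explain why such a generating set, \emph{if available}, would collapse to the desired form; they do not supply it. There is a second unjustified step at the very start: the universal property of the $\BQ$-completion gives an extension $\psi^{\BQ}$ of $\iota\circ\psi$, but it does not give injectivity of that extension, and injectivity is exactly what you need before any rigidity statement can be applied.

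The ingredient your proposal never uses is Theorem \ref{lem:atomicembedding}, which is the actual engine of the paper's proof. The intended route is much shorter and stays inside $\GG(\Gamma)$: since $\Gamma$ is atomic, $C(v)\cong\BZ\times F$ with $F$ free of rank $\ge 2$, so $C(\psi(v))$ contains a copy of such a group, and the classification of centralisers in pc groups forces $\psi(v)=(w_v^{k_v})^{g_v}$ for some vertex $w_v$, $k_v\in\BZ\setminus\{0\}$ and $g_v\in\GG(\Gamma)$; the vertices $w_v^{g_v}$ then span a copy of $\Gamma$ inside $\Gamma^e$, and Theorem \ref{lem:atomicembedding} says this embedding is the standard one up to a single conjugator $g$ and an automorphism $\sigma$ of $\Gamma$. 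This simultaneously produces the common $g$, the automorphism $\sigma$, and the integrality of the exponents, with no detour through the completion. Your final paragraphs (integrality via retractions, descending the conjugator from $\GG(\Gamma)^{\BQ}$ to $\GG(\Gamma)$) are solving problems that only arise because of the detour; they cannot rescue the argument while its first step rests on results that logically come after this corollary.
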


\smallskip

In the spirit of classical theorems for abelian and locally nilpotent groups, we show in \cite{CDK2} that every pc group embeds into a divisible group, its $\BQ$-completion, which, roughly speaking, is the smallest divisible group containing $\GG(\Gamma)$. In other words, the $\BQ$-completion of $\GG(\Gamma)$ is an initial object in the category of divisible $\GG(\Gamma)$-groups (i.e. divisible groups with a designated copy of $\GG(\Gamma)$). From the description of the set of injective endomorphisms of an atomic pc group, one deduces that although atomic pc groups are not co-Hopfian, their $\BQ$-completions are. 

\begin{cor}
Let $\Gamma$ be an atomic graph. Then the $\BQ$-completion $\GG(\Gamma)^\BQ$ of $\GG(\Gamma)$ is co-Hopfian.
\end{cor}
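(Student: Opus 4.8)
The plan is to combine the description of injective endomorphisms of $\GG(\Gamma)$ given in the preceding Corollary with the universal property of the $\BQ$-completion. Write $G=\GG(\Gamma)$ and $G^{\BQ}=\GG(\Gamma)^{\BQ}$, and let $\iota\colon G\hookrightarrow G^{\BQ}$ be the canonical inclusion. I first record that, being initial among divisible groups equipped with a designated copy of $G$, the group $G^{\BQ}$ has no proper divisible subgroup containing $G$: for such a subgroup $D'$, initiality produces $\phi\colon G^{\BQ}\to D'$ restricting to the identity on $G$, so the composite $G^{\BQ}\xrightarrow{\phi}D'\hookrightarrow G^{\BQ}$ is the identity (the only endomorphism of $G^{\BQ}$ commuting with $\iota$), forcing $D'=G^{\BQ}$. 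Now let $\Psi\colon G^{\BQ}\to G^{\BQ}$ be injective; we must show it is onto. Put $\psi_{0}=\Psi\circ\iota\colon G\hookrightarrow G^{\BQ}$ and let $D\le G^{\BQ}$ be the divisible hull of $\psi_{0}(G)$. Viewing $(D,\psi_{0})$ as a divisible $G$-group, initiality of $(G^{\BQ},\iota)$ yields a morphism $\phi\colon G^{\BQ}\to D$ with $\phi\circ\iota=\psi_{0}$; since $\phi(G^{\BQ})$ is divisible, contains $\psi_{0}(G)$ and lies in $D$, it equals $D$. By uniqueness of morphisms out of the initial object, $\Psi$ equals the composite $G^{\BQ}\xrightarrow{\phi}D\hookrightarrow G^{\BQ}$, so $\Psi(G^{\BQ})=D$; it therefore suffices to show that the divisible hull of $\psi_{0}(G)$ is all of $G^{\BQ}$.

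To identify the shape of $\psi_{0}$ I would invoke the structure of $G^{\BQ}$ from \cite{CDK2}: $G^{\BQ}$ is the ascending union of a chain of pc groups $G=H_{0}<H_{1}<H_{2}<\cdots$, each isomorphic to $G$, where $H_{i}\hookrightarrow H_{i+1}$ is obtained by adjoining a root of a single vertex generator (here one uses that in a pc group $[x,y]=1$ if and only if $[x,y^{n}]=1$, so adjoining $v^{1/n}$ and relabelling it $v$ returns the same presentation). As $\psi_{0}(G)$ is finitely generated, $\psi_{0}(G)\le H_{N}$ for some $N$; since $H_{N}\supseteq G$, the divisible hull of $H_{N}$ contains that of $G$ and so equals $G^{\BQ}$, whence $G^{\BQ}$ is also the $\BQ$-completion of $H_{N}$. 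By functoriality of the $\BQ$-completion, the isomorphism $H_{N}\cong G$ extends to an automorphism $\beta$ of $G^{\BQ}$ with $\beta(H_{N})=G$. Then $\beta\circ\psi_{0}\colon G\to G$ is an injective endomorphism, so by the preceding Corollary there are $h\in G$, $\tau\in\Aut(\Gamma)$ and nonzero integers $k_{v}$ ($v\in V(\Gamma)$) with $\beta(\psi_{0}(v))=h^{-1}\tau(v)^{k_{v}}h$.

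It remains to check that such a map has full divisible span. Since $\beta$ is an automorphism of $G^{\BQ}$ it preserves divisible hulls, so it carries the divisible hull of $\psi_{0}(G)$ onto the divisible hull of $h^{-1}\langle\,\tau(v)^{k_{v}}\mid v\in V(\Gamma)\,\rangle h$. Roots are unique in $G^{\BQ}$, so any divisible subgroup containing $\tau(v)^{k_{v}}$ contains $\tau(v)$; hence this divisible hull coincides with the divisible hull of $\langle\tau(v)\mid v\in V(\Gamma)\rangle=\tau(G)=G$, which is $G^{\BQ}$ by the first paragraph. As conjugation by $h$ and $\beta^{-1}$ are automorphisms of $G^{\BQ}$, the divisible hull of $\psi_{0}(G)$ is $G^{\BQ}$, and by the reduction above $\Psi$ is surjective, hence an automorphism. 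I expect the crux to be the structural input used here — that $\GG(\Gamma)^{\BQ}$ is exhausted by copies of $\GG(\Gamma)$, so that a finitely generated subgroup isomorphic to $\GG(\Gamma)$ can be fed back into the endomorphism rigidity already established for $\GG(\Gamma)$. Should that description not be available in the precise form required, the alternative is to prove directly that every embedding $\GG(\Gamma)\hookrightarrow\GG(\Gamma)^{\BQ}$ has the form $v\mapsto g^{-1}\tau(v)^{q_{v}}g$ with $g\in\GG(\Gamma)^{\BQ}$, $\tau\in\Aut(\Gamma)$ and $q_{v}\in\BQ\setminus\{0\}$, by re-running the quasi-isometric rigidity of Bestvina, Kleiner and Sageev \cite{BKS} together with the centralizer analysis underlying the preceding Corollary.
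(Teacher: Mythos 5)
The reduction in your first paragraph (using initiality of $(\GG(\Gamma)^{\BQ},\iota)$ to show that $\Psi(\GG(\Gamma)^{\BQ})$ equals the divisible hull of $\psi_0(\GG(\Gamma))$) is a reasonable framing, but the structural input you then feed into it is false, and that is where the argument breaks. The ascending union of copies of $\GG(\Gamma)$ obtained by successively adjoining roots of vertex generators is precisely the intermediate group $\mathcal G(\Gamma,\BQ)$ defined in the paper, and the paper explicitly records that $\mathcal G(\Gamma,\BQ)$ is \emph{not} divisible and is a \emph{proper} subgroup of $\GG(\Gamma)^{\BQ}$: the completion is built from iterated extensions of centralisers of arbitrary elements, not just of vertex generators, and the intermediate stages are not copies of $\GG(\Gamma)$. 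Consequently the step ``$\psi_0(G)\le H_N$ for some $N$'' fails; for instance, conjugation by a square root of an element with cyclic centraliser is an automorphism of $\GG(\Gamma)^{\BQ}$ whose restriction to $\GG(\Gamma)$ has image in no such $H_N$. The subsequent appeal to ``functoriality'' to produce the automorphism $\beta$ is also unjustified as stated: knowing that the divisible hull of $H_N$ is everything is weaker than knowing that $(\GG(\Gamma)^{\BQ},H_N)$ satisfies the initial-object property.

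The paper's actual proof is essentially the fallback you mention in your final sentence, and it is short: for an injective endomorphism $\psi$ of $\GG(\Gamma)^{\BQ}$ one characterises the elements $h$ with non-abelian centraliser as exactly the conjugates of $\BQ$-powers of generators (those with $C(h)\simeq \BQ\times F^{\BQ}$), so each $\psi(v)$ has the form $g_v^{-1}w_v^{q_v}g_v$; Theorem \ref{lem:atomicembedding} then pins the induced vertex map down to a single conjugation composed with a graph automorphism, exactly as in Corollary \ref{cor:injendos}, giving $\psi(v)=g^{-1}\sigma(v)^{q_v}g$ with $q_v\in\BQ\setminus\{0\}$ --- and now, unlike over $\BZ$, the exponents are invertible, so $\psi$ is an automorphism. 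You should carry out that centraliser argument directly in $\GG(\Gamma)^{\BQ}$ rather than route through a chain of copies of $\GG(\Gamma)$.
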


\smallskip

In this context, it is natural to ask if for the class of pc groups the correspondence between quasi-isometric and algebraic rigidities holds in general. More precisely, we call a pc group $\GG(\Delta)$ weakly quasi-isometrically rigid if its quasi-isometry class is determined by the isomorphism type of its extension graph, that is $\GG(\Delta)$ is quasi-isometric to $\GG(\Gamma)$ if and only if the extension graph $\Delta^e$ is isomorphic to $\Gamma^e$. In this terminology, we ask

\begin{question}
Is it true that $\GG(\Gamma)$ is weakly quasi-isometrically rigid if and only if the $\BQ$-completion $\GG(\Gamma)^{\BQ}$ of $\GG(\Gamma)$ is co-Hopfian?
\end{question}

\medskip

We assume that the reader is familiar with basics of the theory of partially commutative groups. We refer the reader to \cite{C} and references there for preliminaries and notation.

\section{Elementary cases}

In this section we review some cases of quasi-isometric classification of pc groups, namely three classical cases: free, free abelian groups and direct product of two free groups; and the case of pc groups defined by trees and triangle-built graphs. 

\smallskip

In the case of free and free abelian groups, we have a complete classification: a finitely generated group $G$ is quasi-isometric to the free abelian group $\BZ^n$ if and only if it is virtually $\BZ^n$ (see \cite{Gromov}); and if $G$ is quasi-isometric to a (non-abelian) free group, then it is commensurable to it (and $G$ acts geometrically on a tree). When we restrict our consideration to the class of pc groups, we deduce that a pc group $\GG(\Delta)$ is quasi-isometric to $\BZ^n$ if and only if $\GG(\Delta)$ is isomorphic to $\BZ^n$; and a pc group $\GG(\Delta)$ is quasi-isometric to a non-abelian free group $F_n$ if and only if $\GG(\Delta)$ is a non-abelian free group $F_m$.

\smallskip

In the free abelian case, Conjecture \ref{conj:qi->EGE} holds trivially. Furthermore, the converse also holds. Indeed, the graph associated to $\BZ^n$ is a clique $\Gamma$ of dimension $n$. Hence, if $\Delta$ is an induced subgraph of the extension graph $\Gamma^e = \Gamma$, then $\Delta$ is a clique of dimension less than or equal to $n$. Furthermore, if $\Gamma < \Delta^e=\Delta$, then it follows that $\Delta=\Gamma$.

\smallskip

If $\GG(\Gamma)$ is a non-abelian free group, then the extension graph associated to $\Gamma$ is an infinite edgeless graph and so Conjecture \ref{conj:qi->EGE} holds. In this case, the converse is also true: if $\Delta < \Gamma^e$, then the graph of $\Delta$ is edgeless and so $\GG(\Delta)$ is a free group. If $\Gamma < \Delta^e$, it follows that $\Delta$ has at least two vertices and so $\GG(\Delta)$ is a non-abelian free group.

\smallskip

These results were generalised to groups acting on direct products of trees by several authors, see \cite{KKL,MSW, Ahl}: if a group $G$ is quasi-isometric to the direct product of two non-abelian free groups $F_n\times F_m$, then $G$ acts geometrically on the direct product of two trees. In particular, a pc group $\GG(\Delta)$ is quasi-isometric to $F_n\times F_m$  if and only if $\GG(\Delta)$ is the direct product of two non-abelian free groups $F_r \times F_s$.

The extension graph $\Gamma^e$ associated to $F_k \times F_l$ is the join graph of two infinite edgeless graphs and so it is immediate to check that Conjecture \ref{conj:qi->EGE} holds. The converse also holds: any induced subgraph $\Delta$ of $\Gamma^e$ is either edgeless or a join. If $\Gamma < \Delta^e$, it follows that $\Delta^e$ is the join of two infinite edgeless graphs and so $\GG(\Delta)$ is isomorphic to $F_{k'} \times F_{l'}$, $k',l'>1$.

\smallskip

We now turn our attention to the class of pc groups whose finitely generated subgroups are pc groups. In \cite{DromsPC}, Droms gives a graph-theoretic characterisation of this class: every finitely generated subgroup of $\GG(\Gamma)$ is a pc group if and only if $\Gamma$ is triangle-built, that is $\Gamma$ contains no induced squares and no induced paths of diameter more than $2$. In this case, Droms shows that $\GG(\Gamma)$ is isomorphic to $\BZ^n \times \GG(\Gamma')$, where $\Gamma'$ is the disjoint union of triangle-built graphs or, equivalently, $\GG(\Gamma')$ is the free product of triangle-built pc groups. By \cite{KKL}, we have that $\GG(\Delta)$ is quasi-isometric to $\GG(\Gamma)$ if and only if $\GG(\Delta)$ is isomorphic to $\BZ^n \times \GG(\Delta')$ and $\GG(\Gamma')$ and $\GG(\Delta')$ are quasi-isometric. Then, by \cite{P} it follows that each (one-ended) factor in the Grushko decomposition of $\GG(\Gamma')$ is equivalent to a (one-ended) factor in the Grushko decomposition of $\GG(\Delta)$ and vice-versa. By induction on the number of vertices in $\GG(\Gamma')$, we conclude that $\Delta' < {(\Gamma')}^e$ and  $\Gamma' < {(\Delta')}^e$ and so Conjecture \ref{conj:qi->EGE} also holds in this case.

\medskip

Let $T$ be a tree of diameter $2$, i.e.  $\GG(T)$ is isomorphic to $\BZ \times F_k$, $k>1$. Then $\GG(\Delta)$ is quasi-isometric to $\GG(T)$ if and only if $\GG(\Delta)$ is isomorphic to $\BZ \times F_n$, $n>1$. On the other hand, $\Delta < T^e$ and $T < \Delta^e$ if and only if $\GG(\Delta)$ is isomorphic to $\BZ \times F_n$, $n>1$. 

\medskip

The quasi-isometric classification for trees of diameter greater than or equal to 3 was established by Behrstock and Neumann. 

\begin{thm}[see \cite{BN}]
Let $T$ be a tree of diameter greater than or equal to 3. Then, $\GG(\Gamma)$ is quasi-isometric to $\GG(T)$ if and only if  $\Gamma$ is a tree of diameter greater than or equal to 3.
\end{thm}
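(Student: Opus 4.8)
\smallskip
\noindent\emph{Proof proposal.}
The plan is to deduce both implications from the quasi-isometric rigidity of non-geometric graph manifold groups, exploiting the fact that a partially commutative group on a tree is a $3$-manifold group. The first step is to realise $\GG(T)$ geometrically. Peeling the leaves of $T$ one at a time --- if $v$ is a leaf with neighbour $w$, then $\GG(T)=\GG(T\setminus v)*_{\langle w\rangle}\langle v,w\rangle$ is an amalgam of $\GG(T\setminus v)$ with $\BZ^2$ over $\BZ$ --- one reaches the standard tree-of-groups decomposition of $\GG(T)$: it is the fundamental group of a graph of groups over the interior tree $T^{\circ}$ obtained by deleting all leaves, with vertex group $\GG(\St(u))\cong\BZ\times F_{\deg u}$ at each interior vertex $u$ and edge group $\langle u,u'\rangle\cong\BZ^2$ along each edge $\{u,u'\}$. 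Each vertex group is $\pi_1$ of a Seifert piece $\Sigma_u\times S^1$ with $\chi(\Sigma_u)<0$ and fibre the central factor $\langle u\rangle$, and each edge gluing identifies a boundary torus of one piece with one of another, sending the fibre direction of one side to a base-curve direction of the other; hence $\GG(T)=\pi_1(M_T)$ for a graph manifold $M_T$ with toral boundary. The crucial point is that $\mathrm{diam}(T)\ge 3$ is exactly the condition that $T^{\circ}$ have an edge, i.e. that $M_T$ have at least two Seifert pieces joined by a fibre-swapping gluing, which makes $M_T$ a \emph{non-geometric} graph manifold.

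Granting this, the implication $\Leftarrow$ is immediate: if $\Gamma$ and $T$ are trees of diameter $\ge 3$, then $\GG(\Gamma)$ and $\GG(T)$ are both non-geometric graph manifold groups, and by the quasi-isometric classification of graph manifold groups \cite{BN} all non-geometric graph manifold groups are mutually quasi-isometric. For the converse, suppose $\GG(\Gamma)$ is quasi-isometric to $\GG(T)$. By the rigidity half of \cite{BN}, $\GG(\Gamma)$ is then commensurable to a non-geometric graph manifold group, in particular a one-ended $3$-manifold group; being at the same time partially commutative, Droms' characterisation of partially commutative $3$-manifold groups forces every connected component of $\Gamma$ to be a tree or a triangle, and one-endedness forces $\Gamma$ to be connected with at least two vertices. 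The triangle is excluded since it gives $\GG(\Gamma)\cong\BZ^3$, a geometric ($\mathbb{E}^3$) manifold group --- equivalently, $\BZ^3$ has linear divergence while $\GG(T)$, not being a join, has super-linear divergence, a quasi-isometry invariant. So $\Gamma$ is a tree, and the same two devices exclude $\mathrm{diam}(\Gamma)\le 2$: such a tree gives $\BZ$, $\BZ^2$, or a direct product $\BZ\times F_n$, none of which is a non-geometric graph manifold group (all are two-ended or have linear divergence). Hence $\Gamma$ is a tree of diameter $\ge 3$.

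The main obstacle is that essentially all the force of the argument is carried by the Behrstock--Neumann rigidity theorem invoked in both directions: that every finitely generated group quasi-isometric to a non-geometric graph manifold group is commensurable to one, and that all of these lie in a single quasi-isometry class. Its proof belongs to the coarse geometry of trees of spaces --- one must show that a quasi-isometry coarsely preserves the JSJ/Bass--Serre decomposition and sends Seifert fibres to Seifert fibres up to bounded Hausdorff distance, and then that the combinatorial ``decorated JSJ graph'' is a complete quasi-isometry invariant which is trivial in the non-geometric case --- and I would not attempt to reprove it. What remains genuinely new, and must be verified carefully, is elementary: the dictionary between ``tree of diameter $\ge 3$'' and ``non-geometric graph manifold'' via the star decomposition and Droms' theorem; the check that every gluing in $M_T$ is fibre-swapping, so that $M_T$ admits no global Seifert fibering and is not a Sol manifold; and the disposal of the degenerate low-diameter trees by one-endedness, divergence, and the geometric/non-geometric dichotomy. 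A minor technical caveat is that the $M_T$ typically have non-empty boundary, so one must invoke the version of \cite{BN} valid for graph manifolds with boundary (and note that the diameter-$\ge 3$ hypothesis is precisely what guarantees $M_T$ is non-trivial in their sense).
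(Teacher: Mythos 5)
First, a point of reference: the paper gives no proof of this statement at all --- it is quoted directly from \cite{BN} (the ``see \cite{BN}'' is the entire argument). So your proposal is not an alternative to an internal proof; it is a reconstruction of the reduction carried out in \cite{BN} itself. The dictionary you set up is the right one and is sound: $\GG(T)$ decomposes over the interior tree with vertex groups $\GG(\St(u))\cong\BZ\times F_{\deg u}$ and edge groups $\BZ^2$, each vertex group is $\pi_1(\Sigma_u\times S^1)$, the gluings flip fibre and base, and $\mathrm{diam}(T)\ge 3$ is exactly the condition that the interior tree have an edge, i.e.\ that there be at least two Seifert pieces.

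Two steps, however, do not close as written. (a) In the ``if'' direction you assert that \emph{all} non-geometric graph manifold groups are mutually quasi-isometric. This is false: $H^{*}(G;\BZ G)$ already separates closed graph manifolds from those with boundary, and among manifolds with boundary the classification of \cite{BN} is by bisimilarity of \emph{two-coloured} decomposition graphs, of which there are many classes. What rescues the argument is that every Seifert piece of every $M_T$ meets $\partial M_T$ (each $\Sigma_u$ retains an unglued boundary circle), so all the $M_T$ lie in the single all-boundary-coloured bisimilarity class; this needs to be said explicitly. (b) In the ``only if'' direction, quasi-isometric rigidity (which is really Kapovich--Leeb, on which \cite{BN} build) yields only that $\GG(\Gamma)$ is \emph{weakly commensurable} to a graph manifold group, i.e.\ virtually a $3$-manifold group. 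Droms' theorem \cite{DromsCoherence} characterises the partially commutative groups that \emph{are} $3$-manifold groups, and you cannot apply it to a group that is merely virtually one without an additional argument; it is not formal that such a pc group has components that are trees or triangles. A workable patch: commensurability of torsion-free groups preserves cohomological dimension, and one-endedness plus $\mathrm{cd}=2$ force $\Gamma$ to be connected, triangle-free and with an edge; one must then still exclude triangle-free non-trees (e.g.\ cycles of length $\ge 4$) by comparing the coarse JSJ/bisimilarity data rather than by Droms. Until that is done, the converse direction has a genuine gap.
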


It is easy to see, see for instance \cite{KK}, that any tree is an induced subgraph of the extension graph $T^e$ of a tree $T$ of diameter greater than or equal to 3.

On the other hand, since the extension graph of a tree is a tree, if $\Gamma$ is connected it follows that $\Gamma < T^e$ if and only if $\Gamma$ is a tree. If $T < \Gamma^e$, then we have that the diameter of $\Gamma$ is greater than or equal to $3$. In this case, we have shown that $\GG(\Gamma)$ is quasi-isometric to $\GG(T)$, where $T$ is a tree if and only if $\Gamma < T^e$ and $T< \Gamma^e$ and $\Gamma$ is connected.

\section{n-trees}

The class of $n$-trees was introduced and studied by Behrstock, Januszkiewic and Neumann in \cite{BJN}. We next recall some basic definitions and results and refer the reader to \cite{BJN} for further details.

\begin{defn}\label{def:ntrees}
We define the class of $n$-\emph{trees} $\mT_n$ to be the smallest class of $n$-dimensional simplicial complexes satisfying:
\begin{itemize}

\item the $n$-simplex is in $\mT_n$;
\item If $K_1$ and $K_2$ are complexes in $\mT_n$ then the union of $K_1$ and $K_2$ along any
$(n-1)$-simplex is in $\mT_n$.
\end{itemize}
\end{defn}

For $n = 1$ this is the class of finite trees. In this section, we consider pc groups defined by $n$-trees. Although formally we should define these pc groups by the 1-skeleton of the $n$-tree, we abuse the notation and write $\GG(K)$, where $K\in \mT$.

Fix a complex $K \in  \mT_n$. We define a \emph{piece} to be the star in $K$ of an $(n-1)$-simplex of $K$ which is the boundary of at least two $n$-simplices. Let $P$ denote a piece of $K$. Then, $P$ consists of a finite collection of $n$-simplices attached along the common $(n-1)$-simplex, i.e. the join of the $(n-1)$-simplex with a finite set of points $p_1,\dots,p_k$.

\begin{defn}
To each $K \in  \mT_n$ we associate a labelled bipartite tree $\gph(K)$ as follows. To each piece in $K$ we assign a vertex labelled $p$ (for piece). To each of the $n$-simplices which is in more than one piece we assign a vertex labelled $f$ (for face). Each $f$-vertex is connected by an edge to each of the $p$-vertices which corresponds to a piece containing the $n$-simplex.

Since for any $K \in \mT_n$ there is a simplicial map to an $n$-dimensional simplex $\Delta$, which is unique up to permutation of $\Delta$, it follows that labelling the vertices of $\Delta$ by $1$ to $n + 1$ pulls back to a consistent labelling on all the vertices of $K$. Note that in any piece all the vertices of their common $(n-1)$-simplex (the ``spine'' of the piece) are given the same label. We label each $p$-vertex by the index of the $n$-simplex vertex which is not on the spine of the corresponding piece. Hence the label set for the $p$-vertices are the elements of the set $\{1,\dots, n + 1\}$. The possible labels for vertices are thus $p1, p2, \dots, p{(n+1)}$ and $f$, for a total of $n+2$ possible labels.
The $p/f$-distinction gives a bipartite structure on our tree $\gph(K)$. The $p$-vertices to which a given $f$-vertex is connected have distinct labels, so a $f$-vertex has valence at most $n + 1$ (and at least 2). A $p$-vertex can be connected to any number of $f$-vertices.
\end{defn}

Note that the graph $\gph(K)$ associated to an $n$-tree corresponds to the graph of the graph-of-groups decomposition of $\GG(K)$ where vertex groups are fundamental groups of the pieces and edges groups are labelled by the fundamental group of an $n$-simplex (i.e. edge groups are free abelian).

\begin{defn}
A \emph{coloured graph} is a graph $\Gamma$, a set $C$, and a ``vertex colouring'' $c: V (\Gamma) \to C$.
A \emph{weak covering of coloured graphs} is a graph homomorphism $f : \Gamma \to \Gamma'$ which respects colours and has the property: for each $v \in V(\Gamma)$ and for each edge $e' \in E(\Gamma')$ at $f(v)$ there exists an $e \in E(\Gamma)$ at $v$ with $f(e) = e'$.
\end{defn}

Henceforth, we assume all graphs we consider to be connected. It is easy to see that a weak covering is then surjective.

\begin{defn}\label{def:bisimilar}
Coloured graphs  $\Gamma_1$, $\Gamma_2$ are \emph{bisimilar} if $\Gamma_1$ and $\Gamma_2$ weakly cover some common coloured graph.
\end{defn} 

The main result of \cite{BJN} is the quasi-isometric classification of $n$-trees in terms of bisimilarity of the defining graphs. More precisely, the authors prove:

\begin{thm}[Theorem 1.1, \cite{BJN}]

Given $K, K' \in \mT_n$. The groups $\GG(K)$ and $\GG(K')$ are quasi-isometric if and only if $\gph(K)$ and $\gph(K')$ are bisimilar after possibly reordering the $p$-colours by an element of the symmetric group on $n + 1$ elements.
\end{thm}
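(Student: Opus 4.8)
The plan is to replace $\GG(K)$ by an explicit geometric model and to match both implications against the combinatorial content of bisimilarity, which I will use in the equivalent form of a \emph{bisimulation relation}: a colour-preserving relation $R\subseteq V(\gph(K))\times V(\gph(K'))$, total in both directions, such that related vertices have matching incident edges (if $(u,v)\in R$ and $u$ is adjacent to $u'$ then $v$ is adjacent to some $v'$ with $(u',v')\in R$, and symmetrically). Existence of such an $R$ is exactly the assertion that $\gph(K)$ and $\gph(K')$ weakly cover a common coloured graph, i.e.\ are bisimilar. First I would record the model coming from the graph-of-groups decomposition above: $\GG(K)$ acts cocompactly on a tree of spaces $X_K$ over the Bass--Serre tree $T_K$, whose vertex (piece) spaces are quasi-isometric to $\mathbb{R}^n\times T_k$ --- the spine $(n-1)$-simplex contributing the Euclidean factor $\mathbb{R}^n$ and the free factor $F_k$ of the piece contributing a regular tree $T_k$ --- and whose edge ($n$-simplex) spaces are the flats $\mathbb{R}^{n+1}$ glued between adjacent pieces. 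Thus $\GG(K)$ is quasi-isometric to $X_K$, its coarse geometry organised by the maximal flats $\mathbb{R}^{n+1}$ together with the branching pattern recorded by $\gph(K)$, its $p/f$-bipartition, and its $p$-colours, where the $p$-colour is the coordinate direction of the ambient $(n+1)$-clique lying off the spine.

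For sufficiency I would turn a bisimulation $R$ into a quasi-isometry $X_K\to X_{K'}$ by a back-and-forth construction over the Bass--Serre trees. Starting from a related base pair, I would match flats $\mathbb{R}^{n+1}$ by the identity after applying a single global $S_{n+1}$-relabelling of the coordinate directions, and match piece spaces $\mathbb{R}^n\times T_k$ by quasi-isometries respecting the prescribed gluing flats; the totality and lifting properties of $R$ guarantee that each branch on one side can be continued on the other, so the construction extends over all of $T_K$. Since the pieces come in finitely many isometry types and $R$ is a relation between finite graphs, the local matchings can be chosen with uniformly bounded distortion, yielding a genuine quasi-isometry. This direction is essentially bookkeeping once the fibrewise building block and the uniform tree matching are set up.

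The hard direction, and the main obstacle, is necessity: from a quasi-isometry $\GG(K)\to\GG(K')$ I must produce a bisimulation. The crux is to show that the quasi-isometry coarsely preserves the canonical structure of the model, and here I would invoke quasi-isometric rigidity of the JSJ-type decomposition of these trees of spaces, in the spirit of the graph-manifold arguments of \cite{BN}: the maximal flats $\mathbb{R}^{n+1}$ are the top-dimensional quasi-flats and hence a quasi-isometry invariant; the pieces are recovered as the maximal families of maximal flats that pairwise coarsely intersect in a common spine $\mathbb{R}^n$; and the Bass--Serre/branch structure is therefore coarsely preserved. This produces a quasi-isometry of trees of spaces, from which I read off a relation $R$ pairing $p$-vertices with $p$-vertices and $f$-vertices with $f$-vertices and matching incident edges, and I would then verify directly that $R$ has the two-sided lifting property of a bisimulation. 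The delicate point, which I expect to be the technical heart, is the colours: a quasi-isometry need not fix the individual coordinate directions of the ambient simplex but can only permute them, and one must show that a \emph{single} global permutation in $S_{n+1}$ works rather than locally varying ones. I would handle this by tracking the asymptotic directions of the $\mathbb{R}^{n+1}$-flats across adjacent pieces and checking that the quasi-isometry transports these directions consistently, which is precisely what forces bisimilarity after a global reordering of the $p$-colours.
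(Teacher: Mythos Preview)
The paper does not contain a proof of this theorem. It is quoted verbatim as Theorem~1.1 of \cite{BJN} (Behrstock--Januszkiewicz--Neumann) and used as a black box; the surrounding section then proves the paper's own results (Lemma~\ref{lem:weakcoverimplege} and its corollaries) which relate bisimilarity to the extension-graph embeddability conditions $\Delta<\Gamma^e$ and $\Gamma<\Delta^e$. So there is no ``paper's own proof'' to compare your proposal against.

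That said, your sketch is broadly the right shape for the actual argument in \cite{BJN}: one does model $\GG(K)$ by a tree of spaces with $\mathbb{R}^{n+1}$ edge spaces and $\mathbb{R}^n\times(\text{tree})$ vertex spaces, and the necessity direction does proceed by showing a quasi-isometry coarsely preserves maximal flats and the induced piece/face decomposition, in the spirit of \cite{BN}. Two points where your outline is thin relative to the real proof: first, the rigidity step---that top-dimensional quasi-flats are Hausdorff-close to genuine flats and that the piece structure is recoverable from their coarse intersection pattern---is the substantive content and requires the quasi-flat machinery developed in \cite{BN,BJN}, not just an invocation; second, the ``single global permutation'' of $p$-colours is handled in \cite{BJN} by observing that the colour of a piece is determined by which coordinate hyperplane of the ambient $\mathbb{R}^{n+1}$ its spine lies in, and that a quasi-isometry between $\mathbb{R}^{n+1}$'s permutes coordinate directions up to bounded error globally, because adjacent flats share an $\mathbb{R}^n$ and hence the permutation propagates. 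Your proposal gestures at this but would need to make the propagation argument precise.
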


\smallskip

The goal of this section is to show that the graphs associated to $n$-trees $\Delta$ and $\Gamma$ are bisimilar (after possibly reordering the $p$-colours) if and only if $\Delta <\Gamma^e$ and $\Gamma < \Delta^e$.

\smallskip

Before we turn our attention to the proof, we recall an easy but very useful lemma that describes the extension graph $\Gamma^e$ as a sequence of ``doublings" over stars of vertices.

\begin{lemma}[Lemma 22, \cite{KK}]\label{lem:22kk}
Let $\Gamma$ be a finite graph and $\Delta$ be a finite induced subgraph of $\Gamma^e$. Then there exists an $l > 0$, a sequence of vertices $v_1,v_2,...,v_l$ of $\Gamma^e$, and a sequence of finite induced subgraphs $\Gamma$ = $\Gamma_0 \le \Gamma_1 \le \dots \le \Gamma_l$ of $\Gamma^e$ where $\Gamma_i$ is obtained by taking the double of $\Gamma_{i-1}$ along $\St_{\Gamma_{i-1}}(v_i)$ for each $i = 1,\dots,l$, such that $\Delta \le \Gamma_l$.
\end{lemma}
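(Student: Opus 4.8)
The plan is to realise each doubling step inside the extension graph $\Gamma^e$ as conjugation by a single vertex, and then to reach an arbitrary finite induced subgraph $\Delta$ by an induction on the word lengths of the conjugating elements.

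First I would set up the dictionary between doubles and conjugation. Let $H$ be a finite induced subgraph of $\Gamma^e$ and $v\in V(H)$. Conjugation by $v$ is an automorphism of $\GG(\Gamma)$, so it permutes $V(\Gamma^e)$ and preserves commutation; hence $w\mapsto v^{-1}wv$ is a graph automorphism of $\Gamma^e$ carrying $H$ isomorphically onto the induced subgraph $H^{v}$ on $\{v^{-1}wv\mid w\in V(H)\}$. Since $v^{-1}wv=w$ exactly when $[w,v]=1$, i.e. exactly when $w\in \St_H(v)$, the induced subgraph of $\Gamma^e$ on $V(H)\cup V(H^{v})$ is the double of $H$ along $\St_H(v)$, with the two copies $H$ and $H^{v}$ glued along $\St_H(v)$ — \emph{provided} that no vertex of $V(H)\setminus V(\St_H(v))$ is carried by $v$ back into $V(H)$. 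I return to this proviso at the end.

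Next I would reduce a given finite $\Delta\le \Gamma^e$. Write $V(\Delta)=\{g_1^{-1}x_1g_1,\dots,g_k^{-1}x_kg_k\}$ with $x_j\in V(\Gamma)$ and each $g_j$ of minimal length representing that vertex (so no final letter of $g_j$ commutes with $x_j$), and induct on $N:=\max_j|g_j|$. If $N=0$ then $V(\Delta)\subseteq V(\Gamma)$, so $\Delta\le\Gamma=\Gamma_0$ and one doubling along the star of any vertex gives $\Gamma_1\ge\Delta$. If $N\ge 1$, for each $j$ with $|g_j|=N$ peel off the last letter, $g_j=g_j'z_j^{\varepsilon_j}$ with $z_j\in V(\Gamma)$, $\varepsilon_j\in\{\pm1\}$, $|g_j'|=N-1$, so that
\[
g_j^{-1}x_jg_j = z_j^{-\varepsilon_j}\,u_j\,z_j^{\varepsilon_j},\qquad u_j:=(g_j')^{-1}x_jg_j'.
\]
The induced subgraph $\Delta^{-}$ of $\Gamma^e$ on $\{g_j^{-1}x_jg_j\mid |g_j|<N\}\cup\{u_j\mid |g_j|=N\}$ has all its conjugating elements of length $\le N-1$, so by the inductive hypothesis there is a sequence of doublings $\Gamma=\Gamma_0\le\dots\le\Gamma_{l'}$ inside $\Gamma^e$ with $\Delta^{-}\le\Gamma_{l'}$. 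Now, for each pair $(z,\varepsilon)$ occurring among the $(z_j,\varepsilon_j)$ (finitely many, and $z\in V(\Gamma)\subseteq V(\Gamma_{l'})$) I would perform one further doubling, realised so as to adjoin $z^{-\varepsilon}wz^{\varepsilon}$ for all current vertices $w$; after these finitely many steps the final $\Gamma_l$ contains $V(\Gamma_{l'})$ together with all the $z_j^{-\varepsilon_j}u_jz_j^{\varepsilon_j}=g_j^{-1}x_jg_j$, hence contains $V(\Delta)$. Since $\Delta$ and $\Gamma_l$ are both induced subgraphs of $\Gamma^e$, this gives $\Delta\le\Gamma_l$ with $l\ge1$, completing the induction.

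The main obstacle is the proviso from the first step: at each doubling one must exclude ``accidental'' coincidences $v^{-1}wv\in V(H)$ with $w\in V(H)\setminus V(\St_H(v))$, since otherwise the graph one adjoins is only a proper quotient of the intended double — and such coincidences genuinely can occur for careless choices (for instance, doubling once more along $\St(z)$ a graph that already contains both $y$ and $z^{-1}yz$). Ruling them out uses the geometry of $\Gamma^e$: the stabiliser in $\GG(\Gamma)$ of a vertex $x$ of $\Gamma^e$ is its centraliser $\langle\St_{\Gamma^e}(x)\rangle$, which tightly constrains how the conjugates $g^{-1}\Gamma g$ of $\Gamma$ sit inside $\Gamma^e$ (any two of them intersect in the star of a single vertex, or are disjoint), and more generally lets one check that $V(\Gamma_{l'})\cap v^{-1}V(\Gamma_{l'})v=V(\St_{\Gamma_{l'}}(v))$ for the doublings actually used. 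Choosing the order and direction of the doublings so that each newly adjoined copy is glued back precisely along the star it should be is the delicate point; with that in hand, the length estimates above reduce everything else to routine bookkeeping with reduced forms of the conjugators.
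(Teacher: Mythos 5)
The paper does not prove this statement; it is imported verbatim from \cite{KK} (where it is Lemma~22), so there is no in-paper argument to compare against. Your overall architecture --- realising a doubling along $\St(v)$ as conjugation by $v$, and reaching $\Delta$ by induction on the maximal length of minimal conjugators, peeling one letter off at a time --- is indeed the standard route and the bookkeeping in your induction (minimality of $g_j'$ once the last letter is peeled off, performing one extra doubling per pair $(z,\varepsilon)$) is sound.

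The genuine gap is the ``proviso'': showing that the induced subgraph of $\Gamma^e$ on $V(H)\cup V(H^{v})$ really is the double of $H$ along $\St_H(v)$ is the entire mathematical content of the lemma, and you defer it rather than prove it. Two concrete problems. First, the tool you invoke --- that two conjugates of $\Gamma$ in $\Gamma^e$ meet in at most the star of a vertex --- is the atomic-graph Lemma~26 of \cite{KK} (girth at least $5$, no separating stars); it is false for general finite $\Gamma$, which is the generality required here. Second, avoiding vertex coincidences is not sufficient: the induced subgraph can also acquire \emph{extra edges} between the two copies away from the star, which equally prevents it from being the abstract double. For example, let $\Gamma$ have vertices $u,y,z$ and the single edge $\{u,y\}$, and let $H$ be the induced (edgeless) subgraph of $\Gamma^e$ on $\{z,\,y,\,u^{z}\}$, so $\St_H(z)=\{z\}$. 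Conjugating by $z$ gives $V(H)\cap V(H)^{z}=\{z\}$, so there are no vertex coincidences, yet $[u^{z},y^{z}]=z^{-1}[u,y]z=1$ produces an edge between the two copies that is absent from the double of an edgeless graph. So your criterion $V(\Gamma_{l'})\cap v^{-1}V(\Gamma_{l'})v=V(\St_{\Gamma_{l'}}(v))$ is not the right thing to verify even if you could verify it. The standard repair is to realise the double along $\St(v)$ by conjugating by a suitable power $v^{n}$ (the centraliser of $v^n$ equals that of $v$, and one shows the set of ``bad'' exponents --- those producing either a coincidence or an extra edge --- is finite, using normal forms and the structure of centralisers in $\GG(\Gamma)$); without some argument of this kind the proof is incomplete at its crucial step.
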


\begin{lemma}\label{lem:weakcoverimplege}
Assume that $\gph(\Delta)$ weakly covers $\gph(\Gamma)$, then $\Delta <\Gamma^e$.
\end{lemma}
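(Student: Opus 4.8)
The plan is to translate the combinatorial datum of a weak covering $\gph(\Delta)\to\gph(\Gamma)$ into the graph-theoretic statement $\Delta < \Gamma^e$ via Lemma~\ref{lem:22kk}, i.e. by realising $\Delta$ as an induced subgraph of an iterated double of $\Gamma$ over stars of vertices. The key observation is that doubling the graph $\Gamma$ over the star $\St_\Gamma(v)$ of a vertex $v$ has a transparent effect on the associated bipartite tree $\gph$ when $v$ ranges over the vertices lying on the spine of a piece: it corresponds, at the level of $\gph$, to attaching a new copy of a subtree hanging off an $f$-vertex, or more precisely to ``unfolding'' the tree $\gph(\Gamma)$ one step further so that the quotient map of the weak covering becomes locally injective on a larger ball. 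So the first step is to make precise the dictionary between (a) the operation ``double $K$ along a piece $P$'', equivalently ``double the $1$-skeleton along the star of a spine vertex'', and (b) the operation on $\gph$ that duplicates the pendant subtree at an $f$-vertex adjacent to the $p$-vertex of $P$. This should be a direct unwinding of the definition of the graph-of-groups decomposition recorded after Definition~3, together with the fact that doubling a graph-of-groups along an edge group duplicates the corresponding branch.

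Second, I would set up an induction. Since $\gph(\Delta)$ weakly covers $\gph(\Gamma)$, and weak coverings of connected coloured graphs are surjective, $\gph(\Delta)$ is obtained from $\gph(\Gamma)$ by a finite sequence of ``unfoldings'': lift edges that are forced to exist by the weak-covering property but are not yet present, one pendant branch at a time. (Here one uses that $\Delta$ is a \emph{finite} complex, so only finitely many such unfoldings are needed to exhaust $\gph(\Delta)$ up to the covering map.) Each such unfolding of $\gph$ is realised, by the dictionary of the first step, as a double of the current $n$-tree along a piece, hence as a double of its $1$-skeleton along the star of a spine vertex. Running this in parallel with Lemma~\ref{lem:22kk}: after $l$ steps we obtain induced subgraphs $\Gamma = \Gamma_0 \le \Gamma_1 \le \cdots \le \Gamma_l$ of $\Gamma^e$, where $\Gamma_i$ is the double of $\Gamma_{i-1}$ over $\St_{\Gamma_{i-1}}(v_i)$ for suitable spine vertices $v_i$, and such that $\gph(\Gamma_l)$ maps onto $\gph(\Gamma)$ refining the original weak covering. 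One then checks that for $l$ large enough the $1$-skeleton of $\Delta$ is an induced subgraph of $\Gamma_l \le \Gamma^e$, which is exactly the assertion $\Delta < \Gamma^e$.

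The main obstacle I expect is bookkeeping the colours and the bipartite structure through the doubling: a double over $\St(v)$ for an arbitrary vertex $v$ of $\Gamma$ need not correspond to an unfolding of $\gph$ at all, so one must argue that the weak-covering structure lets us always choose $v$ to be a spine vertex of a piece, and that the resulting $\Gamma_i$ is again (the $1$-skeleton of) an $n$-tree with $\gph(\Gamma_i)$ carrying a compatible colouring — the reordering of $p$-colours by the symmetric group on $n+1$ letters, which appears in the classification theorem of \cite{BJN}, does not intervene here because we are lifting along a fixed weak covering rather than comparing two arbitrary $n$-trees. A secondary, more routine point is to verify induced-ness: that no extra edges appear between the image of $\Delta$ and itself inside $\Gamma_l$; this follows because doubling along a star is an induced-subgraph operation and the tree structure of $\gph$ prevents accidental commutation (edges in $\Gamma^e$ correspond to commuting conjugates, and in a tree-based $\gph$ these are controlled by the spines).
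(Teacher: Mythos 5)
Your overall strategy -- run the weak covering $\gph(\Delta)\to\gph(\Gamma)$ in parallel with Lemma~\ref{lem:22kk}, realising the ``unfolding'' of $\gph(\Gamma)$ by iterated doublings inside $\Gamma^e$ -- is the same in spirit as the paper's argument, which also inducts along the tree $\gph(\Delta)$ and uses Lemma~\ref{lem:22kk} at each step. But there is a genuine gap at what should be your base case. The graph $\gph(K)$ only records pieces and those $n$-simplices lying in \emph{more than one} piece; it is blind to the leaf $n$-simplices (those whose spine face is their only shared $(n-1)$-face). Consequently $\gph(\Delta)$ and $\gph(\Gamma)$ can be isomorphic as coloured trees -- so that no unfolding whatsoever is required -- while $\Delta$ has strictly more leaves than $\Gamma$ in some piece and is certainly not a subgraph of $\Gamma$. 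Your scheme, which produces $\Delta$ only from the combinatorics of the map $\gph(\Delta)\to\gph(\Gamma)$, terminates here with nothing left to do and hence cannot produce the required embedding. The paper handles exactly this situation by an explicit conjugation trick: if $y_1$ is an apex of a leaf over the face $F$ and $x_1$ is another apex over $F$, the conjugates $y_1^{x_1^i}$ give infinitely many pairwise non-adjacent vertices of $\Gamma^e$ adjacent to all of $F$, which is how the surplus leaves of $\Delta$ are accommodated. Some such argument (or a careful choice of doublings that manufactures these conjugates) is indispensable and is missing from your proposal.

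This gap is aggravated by your decision to double only over stars of \emph{spine} vertices: if $v$ lies on the spine of the piece $P$, then every vertex of $P$ (spine and apexes alike) lies in $\St(v)$, so the double of $\Gamma$ over $\St(v)$ fixes $P$ pointwise and creates no new apexes over the face of $P$ -- it only duplicates material outside $P$, i.e.\ whole branches of $\gph$ at once (not a single pendant subtree at one $f$-vertex, as you assert). So the operation you allow yourself can never repair a leaf deficit in the very piece you are working on. Finally, even granting the dictionary, your sketch does not address why successive doublings are along genuinely \emph{new} conjugates (the paper must choose the conjugator $g_n$ with prescribed support and not already used in the previous doublings to keep the map injective); this is a fixable but non-trivial bookkeeping point that your appeal to ``induced-ness follows because doubling is an induced-subgraph operation'' does not cover.
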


\begin{proof}
Assume that $f:\gph(\Delta) \to \gph(\Gamma)$ is a weak covering. Since both graphs are connected and without multiple edges, $f$ is surjective and so the graph $\gph(\Gamma)$ is an induced subgraph of $\gph(\Delta)$.

We prove the statement by induction on the number $m=|V(\gph(\Delta)) \setminus V(\gph(\Gamma))|$.

\textit{Base of induction: let $m=0$.} In this case, the $\gph(\Gamma)$ is an induced subgraph with the same set of vertices and so it coincides with $\gph(\Delta)$. It follows from the definition of $\gph(\Delta)$ and $\gph(\Gamma)$ that $\Delta$ and $\Gamma$ only differ by their ``leaves'', that is they differ by the set of $n$-simplices such that only one of its $(n-1)$-faces is a face of another $n$-simplex. 

Define the \emph{core} of an $n$-tree $\Lambda$ to be the $n$-subtree of $\Lambda$ that consists of $n$-simplices which are not leaves of $\Lambda$. In this terminology, if $m=0$, then the cores of $\Delta$ and $\Gamma$ are isomorphic (possibly empty). We denote by $\varphi'$ a colour preserving isomorphism from the core of $\Delta$ to the core of $\Gamma$. 

Our goal is to define an embedding $\varphi$ from $\Delta$ to $\Gamma^e$ that extends the isomorphism $\varphi'$. Without loss of generality, assume that $\Delta$ and $\Gamma$ are different, that is there exists an $(n-1)$-simplex $F$ which is a face of at least two $n$-simplices, one of these simplices is a leaf and the pieces associated to $F$ in $\Delta$ and in $\Gamma$ differ in the number of leaves. For the $n$-simplices that contain the face $F$ in $\Gamma$ (resp. in $\Delta$) and which belong to the core, we denote by $x_1, \dots, x_r$ (resp. $x_1', \dots, x_r'$) the vertices of these simplices that do not belong to the face $F$ in $\Gamma$ (resp. in $\Delta$). For the other $n$-simplices, that is the leaves that contain $F$, we denote by $y_1, \dots, y_k$ (resp. $y_1', \dots, y_l'$, $l >k>0$) the vertices that do not belong to the face $F$ in $\Gamma$ (resp. in $\Delta$). Note that $r$ may be $0$, in which case $k\ge 2$ and $\gph(\Gamma)$ is a vertex.

We define an embedding from the core of $\Delta$ together with the piece associated to $F$ to $\Gamma^e$ as follows: $\varphi$ on the core of $\Delta$ is defined as $\varphi'$ and so in particular, $\varphi(x_i')=x_i$, $i=1, \dots, r$; and $\varphi(y_i')=y_1^{(x_1^i)}$, $i=1, \dots, l$ if $r\ne 0$ and otherwise $\varphi(y_i')=y_1^{(y_2^i)}$. It is easy to check that the map $\varphi$ induces a graph embedding, since $y_i$ and $x_j$ do not commute with each other, $i=1, \dots, k$, $j=1, \dots,r$, but they commute with the vertices in the face $F$.

Repeating the above argument for the pieces for which the number of leaves in $\Delta$ is different to that of $\Gamma$, one obtains an embedding from $\Delta$ to $\Gamma^e$. Note that by construction, vertices of a piece $P$ (resp. face) in $\Delta$ which is identified to a vertex $v$ in $\gph(\Delta)$ are sent to conjugates of vertices of the piece $P'$ (resp. face) in $\Gamma$ which is identified to the same vertex $v$ in $\gph(\Gamma)$. Furthermore, vertices of faces in $\Delta$ identified with vertices of $\gph(\Delta)$ are sent to the same conjugate (in this case, the conjugating element is trivial).

\smallskip

\textit{Induction step.} Let $v$ be a leaf of $\gph(\Delta)$ such that the weak covering $f$ restricted to $\gph(\Delta) \setminus \{v\}$ is again a weak covering of graphs. Note that such $v$ exists because $\gph(\Delta)$ and $\gph(\Gamma)$ are trees and $|V(\Delta)| \gneq |V(\Gamma)|$. Assume by induction that there is an embedding $\psi$ from the $n$-tree $\Delta'$ associated to $\gph(\Delta) \setminus \{v\}$ into $\Gamma^e$ such that 
\begin{itemize}
\item vertices of a piece $P$ in $\Delta'$ which is identified to a $p$-vertex $v$ in $\gph(\Delta')$ are sent to conjugates of vertices of the piece $P'$ in $\Gamma$ which is identified to the vertex $f(v)$ in $\gph(\Gamma)$; and
\item vertices of a face $F$ in $\Delta'$ which is identified to an $f$-vertex $w$ in $\gph(\Delta')$ are sent to the \emph{same} conjugate of vertices of the face $F'$ in $\Gamma$ which is identified to the $f$-vertex $f(w)$ in $\gph(\Gamma)$. 
\end{itemize}

By Lemma \ref{lem:22kk}, $\Delta'$ embeds into $\Gamma_{n-1} < \Gamma^e$, where $\Gamma_{n-1}$ is obtained from $\Gamma$ by a sequence of doublings.

Let $w$ be the vertex of $\gph(\Delta)$ so that $(v,w) \in E(\gph(\Delta))$. Note that from the construction of $\gph(\Delta)$, it follows that $v$ is a $p$-vertex and $w$ is an $f$-vertex. By induction the embedding of $\Delta'$ into $\Gamma_{n-1}$ satisfies that the image under $\psi$ of vertices $a_i$ in $\Delta$ associated to the face identified with $w$ is $b_{j_i}^{g_{n-1}}$, for some $g_{n-1}\in \GG(\Gamma)$ and $b_{j_i}$ a vertex in the face identified with $f(w)$. Let $h_n \in \GG(\Gamma)$ be so that the alphabet of $h_n$ is exactly the alphabet that labels the face $F'$ identified with $f(w)$. We can choose $h_n$ so that the element $g_n= h_n^{g_{n-1}^{-1}}$ has not appeared in the (finite) sequence of doubling used to construct $\Gamma_{n-1}$. Indeed, there are at most $n-1$ doublings and the subgroup associated to faces are free abelian groups of rank at least 2. Conjugating by $g_n$, we obtain a doubling of $\Gamma_{n-1}$ along $F'^{g_{n-1}} = F'^{ g_{n-1} g_n}$, that is $\Gamma_n = \Gamma_{n-1} \bigsqcup\limits_{F'^{g_n}} \Gamma_{n-1}^{g_n}$. 

If the piece identified with $v$ in $\gph(\Delta)$ is the same as the piece identified with $f(v)$ in $\Gamma$, then the embedding $\psi$ of $\Delta'$ into $\Gamma^e$ can be extended to an embedding of $\Delta$. Indeed, it suffices to send the vertices $a_i$ of the $n$-simplices of the piece that are not in the face $F$ to $(b_i^{g_{n-1}})^{g_n}$, where $b_i$ are the corresponding vertices in the piece in $\Gamma$ that are not in the face identified with $f(w)$. 

If the pieces have a different number of leaves, we define the embedding $\psi$ as in the base of induction, that is $\psi$ sends the vertices $a_i$ of the $n$-simplices of the piece that are not in the face $F$ to $(\varphi(b_i)^{g_{n-1}})^{g_n}$, where $b_i$ are the corresponding vertices in the piece in $\Gamma$ that are not in the face identified with $f(w)$. By construction the embedding $\psi$ satisfies the induction hypothesis. 
\end{proof}

\begin{cor}
Let $\Gamma, \Delta \in \mathcal T_n$. If $\gph(\Delta)$ and $\gph(\Gamma)$ are bisimilar, then $\Delta <\Gamma^e$ and $\Gamma < \Delta^e$.
\end{cor}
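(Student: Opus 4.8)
The plan is to feed the bisimulation structure into Lemma \ref{lem:weakcoverimplege}. By definition of bisimilarity, $\gph(\Delta)$ and $\gph(\Gamma)$ weakly cover a common coloured graph, and since the maximal bisimulation quotient is an isomorphism invariant of the bisimilarity class, we may assume that both weakly cover the \emph{same} coloured graph $Q$, say via $q_\Delta\colon\gph(\Delta)\to Q$ and $q_\Gamma\colon\gph(\Gamma)\to Q$. The first point is that $Q$ is again the graph of an $n$-tree. Indeed, $\gph(\Delta)$ is a finite tree, and a finite tree weakly covers only a finite tree — otherwise, lifting a non-backtracking closed walk around a cycle of the target indefinitely would produce arbitrarily long embedded non-backtracking paths in the source — so $Q$ is a finite tree; moreover the constraints characterising the graph of an $n$-tree (the bipartite $p/f$ structure, $f$-vertices of valence between $2$ and $n+1$ with pairwise distinct $p$-labels among their neighbours, and consistent labelling) all pass to $Q$ because bisimilar vertices carry the same colour. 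Hence $Q=\gph(\Lambda)$ for some $\Lambda\in\mT_n$, and Lemma \ref{lem:weakcoverimplege} applied to $q_\Delta$ and $q_\Gamma$ gives $\Delta<\Lambda^e$ and $\Gamma<\Lambda^e$.

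The crux is to prove the reverse embeddings $\Lambda<\Delta^e$ and $\Lambda<\Gamma^e$. A weak covering of a tree admits a section: choosing a root of $Q$, a lift of it in $\gph(\Delta)$, and then lifting edges outward produces a colour-preserving \emph{induced} embedding $s\colon Q\hookrightarrow\gph(\Delta)$. Its image is a subtree of $\gph(\Delta)$, which I would realise as a sub-$n$-tree $\Delta_0<\Delta$; using that $\gph(\Delta)$ is a tree (so any walk between two vertices contains the unique geodesic joining them, and two distinct pieces share at most one $n$-simplex) one checks that no extra shared face is created, i.e. $\gph(\Delta_0)\cong\gph(\Lambda)=Q$ as coloured graphs. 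Since $\Delta_0$ and $\Lambda$ have isomorphic $\gph$, they differ only in their leaves, so the argument proving the base case $m=0$ of Lemma \ref{lem:weakcoverimplege} yields an embedding $\Lambda\hookrightarrow\Delta_0^e$; composing with the standard embedding $\Delta_0^e\hookrightarrow\Delta^e$ induced by $\Delta_0<\Delta$ gives $\Lambda<\Delta^e$. The same argument applied to $q_\Gamma$ gives $\Lambda<\Gamma^e$.

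To finish, I would invoke the self-similarity of extension graphs: whenever a finite graph $X$ is an induced subgraph of $\Gamma^e$, so is $X^e$ (this follows from Lemma \ref{lem:22kk} together with $(\Gamma^e)^e\cong\Gamma^e$; cf. \cite{KK}). Applying this to $X=\Lambda<\Gamma^e$ gives $\Lambda^e<\Gamma^e$, and combining with $\Delta<\Lambda^e$ from the first paragraph yields $\Delta<\Lambda^e<\Gamma^e$, i.e. $\Delta<\Gamma^e$; symmetrically $\Gamma<\Lambda^e<\Delta^e$, which is the assertion of the corollary.

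I expect the reverse-embedding step (second paragraph) to be the main obstacle. The delicate parts there are: verifying that the section of the weak covering cuts out a sub-$n$-tree $\Delta_0$ whose $\gph$ is \emph{exactly} $Q$, with no spurious $f$-vertices — which is where the walk/geodesic argument inside the tree $\gph(\Delta)$ is needed — and then correctly propagating the leaf discrepancies between $\Delta_0$ and $\Lambda$ through the base-case argument of Lemma \ref{lem:weakcoverimplege} while keeping track of colours. The closure property of extension graphs used in the last step is standard but should be referenced carefully.
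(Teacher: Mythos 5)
Your proposal is correct and follows the same strategy as the paper: pass to the common weakly covered graph $\gph(\Lambda)$, apply Lemma \ref{lem:weakcoverimplege} to get $\Delta,\Gamma<\Lambda^e$, and conclude by composing with $\Lambda^e<\Gamma^e$ and $\Lambda^e<\Delta^e$. The only divergence is in that last ingredient: the paper asserts $\Lambda<\Gamma$ and $\Lambda<\Delta$ outright (which tacitly requires realising the common quotient by an $n$-tree with the minimal number of leaf simplices in each piece) and hence gets $\Lambda^e<\Gamma^e$ directly, whereas you establish only $\Lambda<\Gamma^e$ via a section of the weak covering and the base case of Lemma \ref{lem:weakcoverimplege}, and then invoke $(\Gamma^e)^e\cong\Gamma^e$ --- a more careful treatment of the leaf discrepancy, at the cost of one extra (standard) fact about extension graphs.
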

\begin{proof}
If $\gph(\Delta)$ and $\gph(\Gamma)$ are bisimilar, it follows from the definition that they weakly cover a graph $\gph(\Lambda)$. Since the graphs are assumed to be connected, it follows that the weak covering is an epimorphism and so, in particular, there is an embedding of $\gph(\Lambda)$ into $\gph(\Gamma)$ and into $\gph(\Delta)$, hence $\Lambda< \Gamma$ and $\Lambda < \Delta$ and so $\Lambda^e < \Gamma^e$ and $\Lambda^e< \Delta^e$. It follows from Lemma \ref{lem:weakcoverimplege}, that $\Delta < \Lambda^e < \Gamma^e$ and $\Gamma < \Lambda^e < \Delta^e$. 
\end{proof}

\begin{lemma}
Let $\Gamma, \Delta \in \mathcal T_n$. Then if $\Delta < \Gamma^e$ and $\Gamma < \Delta^e$, then $\gph(\Delta)$ and $\gph(\Gamma)$ are bisimilar.
\end{lemma}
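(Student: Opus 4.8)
\emph{Overview.} The plan is to reduce, via the doubling description of extension graphs (Lemma~\ref{lem:22kk}), to a combinatorial statement about the coloured trees $\gph(\cdot)$, and then to conclude by a counting argument on bisimulation quotients. Write $\overline G$ for the (unique up to isomorphism) bisimulation quotient of a finite connected coloured graph $G$; I use freely that $G_1$ and $G_2$ are bisimilar iff $\overline{G_1}\cong\overline{G_2}$, that a weak covering $G_1\to G_2$ induces an isomorphism $\overline{G_1}\cong\overline{G_2}$, and that a weak covering which is a bijection on vertex sets is an isomorphism. All statements about bisimilarity and all isomorphisms $\cong$ below are understood up to a reordering of the $p$-colours by an element of the symmetric group on $n+1$ elements (the $\mT_n$-labelling of a graph being unique only up to such a permutation). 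The degenerate cases in which $\Delta$ or $\Gamma$ is a single $n$-simplex are disposed of exactly as in Section~2, so I assume this does not occur.

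\smallskip

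\emph{Step 1: star-doublings preserve the bisimilarity class of $\gph$.} By Lemma~\ref{lem:22kk}, $\Delta<\Gamma^e$ means that $\Delta$ is an induced subcomplex of an $n$-tree $\Gamma_l$ obtained from $\Gamma$ by a finite sequence of doublings along stars of vertices, and symmetrically $\Gamma$ is an induced subcomplex of an iterated star-doubling $\Delta_m$ of $\Delta$. One checks: for $K\in\mT_n$ and $v\in V(K)$, the subcomplex $\St_K(v)=v*\operatorname{lk}_K(v)$ is again an $n$-tree (links of vertices in $n$-trees are $(n-1)$-trees), its coloured tree $\gph(\St_K(v))$ is a connected induced subgraph of $\gph(K)$ (its pieces are exactly the pieces of $K$ whose spine contains $v$), the double $K'$ of $K$ along $\St_K(v)$ again lies in $\mT_n$, and $\gph(K')$ is obtained from $\gph(K)$ by doubling along $\gph(\St_K(v))$; a ``straddling'' piece of $K$ — one meeting $\St_K(v)$ in a single $n$-simplex — merely acquires extra leaf $n$-simplices, hence extra pendant $f$-vertices, on the corresponding $p$-vertex. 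In particular the folding map $\gph(K')\to\gph(K)$ is a weak covering, so $\overline{\gph(K')}\cong\overline{\gph(K)}$; iterating, $\overline{\gph(\Gamma_l)}\cong\overline{\gph(\Gamma)}$ and $\overline{\gph(\Delta_m)}\cong\overline{\gph(\Delta)}$.

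\smallskip

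\emph{Step 2: the key claim and the sandwich.} I claim that whenever $L\le K$ is an inclusion of $n$-trees as an induced subcomplex, $\gph(L)$ weakly covers a connected induced subgraph of $\overline{\gph(K)}$. Granting this, apply it to $\Delta\le\Gamma_l$ and to $\Gamma\le\Delta_m$ and combine with Step~1: there is a weak covering of $\gph(\Delta)$ onto a connected induced subgraph $G$ of $\overline{\gph(\Gamma)}$, and a weak covering of $\gph(\Gamma)$ onto a connected induced subgraph $G'$ of $\overline{\gph(\Delta)}$. Then $\overline{\gph(\Delta)}\cong\overline G$, so, writing $|\cdot|$ for the number of vertices, $|\overline{\gph(\Delta)}|=|\overline G|\le|G|\le|\overline{\gph(\Gamma)}|$, and symmetrically $|\overline{\gph(\Gamma)}|\le|\overline{\gph(\Delta)}|$. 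Hence all these inequalities are equalities: $|\overline G|=|G|$ makes the weak covering $G\to\overline G$ a vertex bijection, hence an isomorphism $\overline{\gph(\Delta)}\cong G$, and $|G|=|\overline{\gph(\Gamma)}|$ with $G\subseteq\overline{\gph(\Gamma)}$ connected forces $G=\overline{\gph(\Gamma)}$. Thus $\overline{\gph(\Delta)}\cong\overline{\gph(\Gamma)}$, i.e.\ $\gph(\Delta)$ and $\gph(\Gamma)$ are bisimilar, as required.

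\smallskip

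\emph{Proof of the key claim, and the main obstacle.} I would prove the claim by induction on the number of $n$-simplices of $K$ not in $L$, building $K$ up from $L$ by attaching leaf $n$-simplices one at a time. Attaching a leaf $n$-simplex $\sigma$ along an $(n-1)$-face $F_0$ either leaves $\gph$ unchanged (when $F_0$ already bounded at least two $n$-simplices) or else adds to $\gph$ a new pendant $p$-vertex $P_0$ and a new $f$-vertex $\tau_0$, with $\tau_0$ joined to $P_0$ and to one pre-existing $p$-vertex; in either case $\gph$ of the smaller complex is an induced subgraph of $\gph$ of the larger one, and the bisimulation quotient can only grow. The hard part will be to show that the connected induced subgraph into which $\gph(L)$ weakly covers survives, up to isomorphism, inside the enlarged quotient — that creating $P_0$ does not trigger an unexpected identification collapsing it. I expect this to be controlled by a colour count: the colour $pj_0$ of $P_0$ is determined by the label $j_0$ omitted by $F_0$, and distinct $(n-1)$-faces of a given $n$-simplex omit distinct labels; tracking these labels along the induction, uniformly in $n$, should show that $P_0$ can merge only with classes in a way that leaves the previously built subgraph intact. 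This colour bookkeeping is the technical heart of the argument.
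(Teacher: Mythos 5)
Your Step 1 and the sandwich argument of Step 2 are sound (and Step 1 is essentially the paper's own observation that star-doublings of $n$-trees change $\gph$ by a doubling, hence preserve the bisimilarity class), but the whole proof hangs on the ``key claim'' --- that for any inclusion $L\le K$ of $n$-trees as an induced subcomplex, $\gph(L)$ weakly covers a connected induced subgraph of $\overline{\gph(K)}$ --- and you do not prove it; you explicitly defer ``the technical heart'' to unspecified colour bookkeeping. This is a genuine gap, not a routine verification. The natural candidate map $\gph(L)\hookrightarrow\gph(K)\to\overline{\gph(K)}$ fails to be a weak covering in general: a piece of $L$ extends to a piece of $K$ that may share strictly more $n$-simplices with other pieces, so the corresponding $p$-vertex of $\gph(L)$ has strictly fewer $f$-neighbours than its image class; likewise, for $n\ge 2$ an $n$-simplex can lie in more pieces of $K$ than of $L$, so an $f$-vertex of $\gph(L)$ can be missing $p$-neighbours of colours that are forced at its image (recall the target must be an \emph{induced} subgraph, so these extra adjacencies cannot be discarded). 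One therefore has to construct both the target subgraph and the map from scratch, and your leaf-attachment induction does not control this: attaching a single leaf $n$-simplex can merge or split bisimulation classes of $\overline{\gph(K)}$ globally, so there is no reason the previously constructed weak covering ``survives''.

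Note also that your claim is a purely one-sided statement (it uses only $L\le K$) and is strictly stronger than what the paper extracts from one embedding. The paper only concludes from $\Delta<\Gamma^e$ that $\gph(\Delta)$ is bisimilar to \emph{some} subgraph of $\gph(\Gamma)$ --- not that $\overline{\gph(\Delta)}$ embeds into $\overline{\gph(\Gamma)}$ --- and then it must combine the two one-sided conclusions with a preliminary reduction to the case where $\Delta$ is minimal (i.e.\ $\gph(\Delta)$ is not bisimilar to the $\gph$ of any proper sub-$n$-tree), concluding that $\gph(\Delta)$ is bisimilar to a subgraph of itself and hence, by minimality, to itself. Your cardinality sandwich on bisimulation quotients would indeed replace that minimality argument and would be cleaner \emph{if} the key claim held, but as it stands the crux of the lemma is exactly the part you have left open, and it is not clear that the claim is even true in the generality you state it.
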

\begin{proof}
Let us first show that one can assume $\Delta$ to be minimal, that is for any proper subgraph $\Delta'$ of $\Delta$ such that $\Delta' \in \mathcal T_n$, the graph $\gph(\Delta)$ is not bisimilar to $\gph(\Delta)'$. Let $\Delta'$ be a subgraph of $\Delta$ such that $\Delta'\in \mathcal T_n$ and the graphs $\gph(\Delta')$ and $\gph(\Delta)$ are bisimilar. If $\Delta < \Gamma^e$ and $\Gamma < \Delta^e$, then $\Delta' < \Gamma^e$ and $\Gamma < {\Delta'}^e$. Indeed, $\Delta'< \Delta$ and by assumption, $\Delta < \Gamma^e$, hence $\Delta' < \Gamma^e$; on the other hand, by assumption $\Gamma < \Delta^e$ and since $\gph(\Delta)$ and $\gph(\Delta')$ are bisimilar, by Lemma \ref{lem:weakcoverimplege} we have that $\Delta^e < {\Delta'}^e$ and so $\Gamma < {\Delta'}^e$. Furthermore, if the statement holds for $\Delta'$ and $\Gamma$, that is $\gph(\Delta')$ and $\gph(\Gamma)$ are bisimilar, then we conclude that the statement also holds for $\Delta$ and $\Gamma$, since $\gph(\Delta)$ is bisimilar to $\gph(\Delta')$ and by transitivity of the relation, $\gph(\Delta)$ and $\gph(\Delta')$ are bisimilar.

We further assume that $\Delta$ is minimal.

 \smallskip

Observe that if $\Gamma$, $\Gamma'$ and $\Lambda$ are $n$-trees and $\Lambda<\Gamma$, $\Lambda<\Gamma'$, then $\Gamma\bigsqcup\limits_{\Lambda} \Gamma'$ is also an $n$-tree. Furthermore, 
$$
\gph(\Gamma\bigsqcup\limits_{\Lambda} \Gamma')= \gph(\Gamma) \bigsqcup\limits_{\gph(\Lambda)} \gph(\Gamma').
$$

By Lemma \ref{lem:22kk}, we have that if $\Delta$ is a finite induced subgraph of $\Gamma^e$, then there exist $l > 0$, a sequence of vertices $v_1,v_2,...,v_l$ of $\Gamma^e$, and a sequence of finite induced subgraphs $\Gamma$ = $\Gamma_0 \le \Gamma_1 \le \dots \le \Gamma_l$ of $\Gamma^e$, where $\Gamma_i$ is obtained by taking the double of $\Gamma_{i-1}$ along $\St_{\Gamma_{i-1}}(v_i)$ for each $i = 1,\dots,l$, such that $\Delta \le \Gamma_l$.

Note that the star of a vertex of an $n$-tree $\Gamma$ is an $n$-tree. If we assume by induction that $\Gamma_{l-1}$ is an $n$-tree (and so is $\St_{\Gamma_{l-1}}(v)$ for every vertex in $\Gamma_{l-1}$), then it follows from the above observation that the double $\Gamma_l$ over the $n$-tree $\St_{\Gamma_{l-1}}(v_{l-1})$ is again an $n$-tree. Furthermore, the double of a tree $T$ over a subtree $T'$, i.e. $T \bigsqcup\limits_{T'} T$ is bisimilar to $T$ and so by induction, $\gph(\Gamma_l)$ is bisimilar to $\gph(\Gamma)$.

Since $\Delta < \Gamma_l$ and $\Delta$ is an $n$-tree, it follows that $\gph(\Delta)$ is a subtree of $\gph(\Gamma_l)$. Since $\gph(\Gamma_l)$ is bisimilar to $\gph(\Gamma)$, it follows that $\gph(\Delta)$ is bisimilar to a subgraph of $\gph(\Gamma)$. 

Since by assumption $\Gamma < \Delta^e$, it follows that either $\Gamma < \Delta$ (and so $\gph(\Gamma) < \gph(\Delta)$) or the above argument applies and so $\gph(\Gamma)$ is bisimilar to a subgraph of $\gph(\Delta)$.

Since $\gph(\Delta)$ is bisimilar to a subgraph of $\gph(\Gamma)$ and $\gph(\Gamma)$ is in turn bisimilar to a subgraph of $\gph(\Delta)$, it follows that $\Delta$  is bisimilar to a subgraph of itself. Since by assumption $\Delta$ is minimal (i.e. not bisimilar to any proper subgraph), we conclude that the subgraph is $\Delta$ and so $\Delta < \Gamma$ and $\Gamma$ is bisimilar to $\Delta$.
\end{proof}

We summarise the results of this section in the following corollary.

\begin{cor}
Let $\Gamma, \Delta \in \mathcal T_n$. Then $\GG(\Gamma)$ and $\GG(\Delta)$ are quasi-isometric if and only if $\gph(\Delta)$ and $\gph(\Gamma)$ are bisimilar if and only if $\Delta < \Gamma^e$ and $\Gamma < \Delta^e$.
\end{cor}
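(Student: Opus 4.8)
The plan is to read the corollary off by concatenating the classification theorem of Behrstock, Januszkiewic and Neumann with the two results established above in this section. Spelled out, the chain to be assembled reads: $\GG(\Gamma)$ is quasi-isometric to $\GG(\Delta)$; equivalently, $\gph(\Gamma)$ and $\gph(\Delta)$ are bisimilar; equivalently, $\Delta < \Gamma^e$ and $\Gamma < \Delta^e$. The first equivalence is Theorem 1.1 of \cite{BJN}; the second is exactly the combination of the preceding Corollary (bisimilarity of $\gph(\Delta)$ and $\gph(\Gamma)$ implies $\Delta < \Gamma^e$ and $\Gamma < \Delta^e$) with the preceding Lemma (its converse). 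So, modulo the bookkeeping point discussed next, nothing is needed beyond citing these three statements in the right order and observing that all three conditions are symmetric in $\Gamma$ and $\Delta$.

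The one subtlety is that Theorem 1.1 of \cite{BJN} asserts bisimilarity only after a possible reordering of the $p$-colours by an element of the symmetric group on $n+1$ elements, whereas the two intermediate results speak of bisimilarity for the colourings pulled back from the (essentially unique) simplicial maps from $\Gamma$ and $\Delta$ to the $n$-simplex. I would reconcile this by noting that reordering the $p$-colours of $\gph(\Gamma)$ by a permutation $\sigma$ is precisely the effect of relabelling the vertices of $\Gamma$ according to $\sigma$, and that such a relabelling leaves the simplicial complex $\Gamma$ --- hence the group $\GG(\Gamma)$ and the extension graph $\Gamma^e$ --- unchanged up to isomorphism. Thus one may always replace $\Gamma$ by a relabelled copy for which no reordering is needed and apply the intermediate results verbatim; none of the three conditions in the statement is affected by this replacement.

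I do not anticipate a genuine obstacle here: every implication in the chain has already been proved, so the argument is purely a matter of assembling citations. The only things to watch are invoking each intermediate result in the correct direction --- the Corollary for ``bisimilarity $\Rightarrow$ double embeddability'' and the Lemma for its converse --- and keeping the colour-reordering ambiguity under control as above. If a cleaner exposition is preferred, one could instead incorporate the reordering remark into the statements of this section's lemmas from the start, after which the present corollary becomes an immediate consequence.
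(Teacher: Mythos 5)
Your proposal is correct and matches the paper's (implicit) argument exactly: the corollary is stated there as a summary of the section, obtained by concatenating the Behrstock--Januszkiewicz--Neumann classification with the preceding corollary and lemma, just as you do. Your remark reconciling the $p$-colour reordering with a relabelling of the vertices of $\Gamma$ is a point the paper leaves tacit, but it does not change the route.
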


\section{Atomic graphs}

In this section we study the class of atomic pc groups introduced by Bestvina, Kleiner and Sageev in \cite{BKS}. Recall that an atomic graph is a graph with no valence 1 vertices, no cycles of length less than 5 and no separating closed stars of vertices. More precisely, in this section we center in the algebraic rigidity of the class of atomic pc groups and show that if an atomic pc group $\GG(\Delta)$ embeds into an atomic pc group $\GG(\Gamma)$ and vice-versa, then the groups $\GG(\Delta)$ and $\GG(\Gamma)$ are isomorphic. 

\smallskip

In order to study the group embeddability into an atomic pc group $\GG(\Gamma)$, it suffices to study the graph embeddability into the extension graph $\Gamma^e$, see \cite{KK}. We begin by recalling the following technical lemma, which follows from the proof of \cite[Lemma 26(6)]{KK} and summarises the tree-like properties of the extension graph $\Gamma^e$.

\begin{lemma}[see Lemma 26 in \cite{KK}] \label{lem:26}
Let $\Gamma$ be an atomic graph and let $\Gamma^g$ and $\Gamma^h$ be two subgraphs of the extension graph $\Gamma^e$. Then
\begin{enumerate}
\item the intersection of $\Gamma^g$ and $\Gamma^h$ is either empty or is contained in the star $\St_{\Gamma^e}(v)$ of some vertex  $v\in \Gamma^g\cap \Gamma^h$ and $\Gamma^g\cap \Gamma^h=\St_{\Gamma^g}(v)= \St_{\Gamma^h}(v)$;
\item the star of any vertex disconnects the extension graph $\Gamma^e$, moreover any two vertices $v$ and $w$ which do not belong to the same conjugate of $\Gamma$ in $\Gamma^e$ can be separated by the star of some vertex of $\Gamma^e$;
\item let $\Gamma^g$ and $\Gamma^h$ be non trivial, $\Gamma^g\cap \Gamma^h=\St_{\Gamma^g}(v)= \St_{\Gamma^h}(v)$, then $\Gamma^g\cap \Gamma^h$ disconnects the extension graph $\Gamma^e$;
\item let $p=(v_1,\dots, v_k, w_1,\dots, w_l, v_{k+1}, \dots, v_m)$ be a simple path in $\Gamma^e$ so that $v_i\in \Gamma$ and $w_j\in \Gamma^e\smallsetminus \Gamma$, $i=1,\dots, m$, $j=1,\dots, l$, then there exists $u\in \Gamma$ so that $v_{k}, v_{k+1}\in \St_{\Gamma}(u)$.
\end{enumerate}
\end{lemma}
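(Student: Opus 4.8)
The plan is to reduce all four assertions to the ``tree-of-copies'' structure of the extension graph and to build that structure inductively from the doubling description of Lemma~\ref{lem:22kk}. Each of (1)--(4) concerns only finitely many vertices (for (2), the fact that two fixed vertices $v,w$ are separated by some $\St_{\Gamma^e}(u)$ is witnessed inside a finite induced subgraph), so by Lemma~\ref{lem:22kk} it suffices to prove the corresponding statement inside each finite graph $\Gamma_l$ of a doubling tower $\Gamma=\Gamma_0\le\Gamma_1\le\cdots\le\Gamma_l$ and then pass to the limit. I would therefore induct on $l$, analysing one elementary double $D=A\sqcup_{\St_A(v)}A'$ at a time, carrying along the invariant: the conjugate copies of $\Gamma$ sitting inside $\Gamma_l$, together with their pairwise intersections, form a bipartite \emph{tree} $\mathcal T_l$ (copies on one side, shared closed stars on the other), and any two copies meet either trivially or in a single common closed star.

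The input that propagates the invariant is the atomic hypothesis, and concretely the condition girth $\ge 5$. A vertex $a$ of $\Gamma$ lies in a conjugate copy $\Gamma^g$ only if $g$ centralises $a$, i.e.\ $g\in\GG(\St_\Gamma(a))$; hence if $a,b\in\Gamma^g\cap\Gamma$ then $g\in\GG(\St_\Gamma(a))\cap\GG(\St_\Gamma(b))=\GG(\St_\Gamma(a)\cap\St_\Gamma(b))$, and in a graph of girth $\ge 5$ two distinct vertices have at most one common neighbour, and none if they are adjacent, so $\St_\Gamma(a)\cap\St_\Gamma(b)$ is tiny; a short case check then shows $\Gamma^g\cap\Gamma$ lies inside a single closed star of $\Gamma$. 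Running the same computation for $D=A\sqcup_{\St_A(v)}A'$ shows the mirror copies attach to the copies of $A$ only along $\St_A(v)$, so no new cycle appears in $\mathcal T_l$ --- this is where ``no induced square'' is used. Granting the invariant, (1) is immediate; for (2), $\St_{\Gamma^e}(u)$ is the union of the shared-star vertices of $\mathcal T$ incident to the copies through $u$, so deleting it disconnects $\mathcal T$ and hence $\Gamma^e$, while two vertices lying in distinct copies $\Gamma^g\ne\Gamma^h$ are separated by the shared star on the $\mathcal T$-geodesic between $\Gamma^g$ and $\Gamma^h$; and (3) is the same remark applied to the shared star $\Gamma^g\cap\Gamma^h$ itself, which is one edge of $\mathcal T$ and hence a cut of the tree.

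I expect (4) to be the main obstacle. The path $p=(v_1,\dots,v_k,w_1,\dots,w_l,v_{k+1},\dots,v_m)$ leaves $\Gamma$ along the edge $\{v_k,w_1\}$ and re-enters along $\{w_l,v_{k+1}\}$; since every edge of $\Gamma^e$ lies in some conjugate copy (using that the centraliser of a generator $x$ is the special subgroup $\GG(\St_\Gamma(x))$), the consecutive edges of $p$ determine a walk $C_0,C_1,\dots,C_l$ in $\mathcal T$ with $C_0\supseteq\{v_k,w_1\}$, $C_l\supseteq\{w_l,v_{k+1}\}$, and every $C_j$ containing some $w_i$, so $C_j\ne\Gamma$. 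Thus $\{C_0,\dots,C_l\}$ is a connected subgraph of $\mathcal T$ avoiding the vertex $\Gamma$, hence contained in a single component $K$ of $\mathcal T\setminus\{\Gamma\}$; since $\mathcal T$ is a tree, $\Gamma$ is joined to $K$ by a unique edge, and as $C_0$ (which contains $v_k\in\Gamma$) and $C_l$ (which contains $v_{k+1}\in\Gamma$) are both $\mathcal T$-adjacent to $\Gamma$, each must be that unique neighbour, so $C_0=C_l=:\Gamma^{h}$. By (1), $\Gamma^{h}\cap\Gamma=\St_\Gamma(u)$ for some $u$, and $v_k,v_{k+1}\in\Gamma^{h}\cap\Gamma=\St_\Gamma(u)$, which is the claim. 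The genuinely delicate points are exactly the ones I have deferred to the structural induction: that $\mathcal T$ really is a tree (so ``unique edge to $K$'' is available) and the sharp form of (1) identifying overlaps with full closed stars --- it is in verifying these that the girth condition and the ``no separating star'' condition of atomicity, together with the simplicity of $p$, are all used. If carrying $\mathcal T_l$ explicitly proves cumbersome, the alternative is to prove (4) first by minimal counterexample on $l$ using only (1) and (2), which is in effect the route of \cite{KK}.
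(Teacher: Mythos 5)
The paper does not actually prove this lemma: it is recorded as a recollection, attributed to the proof of \cite[Lemma~26(6)]{KK}, and no argument is given here. So there is nothing in the paper to compare your write-up against line by line; what follows is an assessment of your plan on its own terms. The overall strategy (conjugate copies of $\Gamma$ meeting in small overlaps organised tree-like, with girth $\ge 5$ and the no-separating-star condition doing the work, built up through the doubling tower of Lemma~\ref{lem:22kk}) is the right one and is close in spirit to what \cite{KK} does, and your derivation of (4) from a tree of copies is the correct idea. The problem is that the two verifications you explicitly defer are the entire content of the lemma, and both are problematic as you have formulated them, so the induction cannot be run as written.

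First, nonempty intersections of copies are not full closed stars. Take $\Gamma=C_5$ (which is atomic) with consecutive vertices $v_1,\dots,v_5$ and $g=v_1v_3$: a vertex $x\in V(\Gamma)$ lies in $\Gamma^g$ iff $g\in \GG(\St_\Gamma(x))$, i.e.\ iff $v_1,v_3\in\St_\Gamma(x)$, so $\Gamma^g\cap\Gamma=\{v_2\}$, a single vertex rather than $\St_\Gamma(v_2)$. Your computation via $\GG(\St_\Gamma(a))\cap\GG(\St_\Gamma(b))=\GG(\St_\Gamma(a)\cap\St_\Gamma(b))$ is sound but will only ever yield \emph{containment} in a closed star; that weaker conclusion is in fact all the rest of the paper uses, but it means your induction invariant must be stated with the weaker form, and the ``$\Gamma^g\cap\Gamma^h=\St_{\Gamma^g}(v)$'' clause needs an extra hypothesis to be true at all. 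Second, the bipartite nerve of copies and overlaps is not a tree without further restrictions: for adjacent $v_1\sim v_2$ the three distinct copies $\Gamma$, $\Gamma^{v_1}$, $\Gamma^{v_2}$ pairwise intersect nontrivially ($\Gamma^{v_1}\cap\Gamma^{v_2}\supseteq\{v_1,v_2\}$, and likewise with $\Gamma$), which produces cycles in the incidence graph of copies and overlaps however you bipartition it, unless you are selective about which overlaps become nodes. Since the ``unique edge from $\Gamma$ to the component $K$'' step in your proof of (4), and the cut arguments for (2) and (3), all rest on $\mathcal T$ being a tree, this is a genuine gap, not a routine verification. To repair it you would need to take as nodes only the separating stars $\St_{\Gamma^e}(u)$ (not arbitrary pairwise overlaps) and prove directly from the doubling induction that each new copy attaches along exactly one such star --- which is essentially the route taken in \cite{KK}, and at that point the doubling induction is carrying the whole proof rather than the nerve.
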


We next show that the embeddability of an atomic graph $\Gamma$ into the corresponding extension graph $\Gamma^e$ is rigid, that is, any embedding $\varphi: \Gamma \to \Gamma^e$ is the identity up to an automorphism. 

\begin{thm}\label{lem:atomicembedding}
Let $\Gamma$ be an atomic graph and $\varphi: \Gamma \to \Gamma^e$ an embedding of $\Gamma$ into the extension graph $\Gamma^e$ {\rm (}as a full subgraph{\rm )}. Then there exists $g\in \GG(\Gamma)$ and an automorphism $\alpha$ of $\Gamma$ so that $\varphi(\alpha(\Gamma))=\Gamma^g$, i.e. up to conjugacy and graph automorphism there is only one way to embed $\Gamma$ into $\Gamma^e$.
\end{thm}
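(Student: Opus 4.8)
The plan is to reduce the statement to the assertion that the image $\varphi(\Gamma)$ is \emph{exactly} a conjugate of $\Gamma$, and then to extract the automorphism for free. Suppose we know $\varphi(\Gamma)\subseteq \Gamma^g$ for some $g\in\GG(\Gamma)$. Since $\varphi(\Gamma)$ is a full subgraph of $\Gamma^e$ isomorphic to the finite graph $\Gamma$, and $\Gamma^g$ is also a full subgraph isomorphic to $\Gamma$ with the same number of vertices, the inclusion must be an equality $\varphi(\Gamma)=\Gamma^g$. Conjugation by $g$ is a graph automorphism of $\Gamma^e$ carrying $\Gamma^g$ onto $\Gamma$, so $\beta\colon v\mapsto g\varphi(v)g^{-1}$ is a well-defined graph automorphism of $\Gamma$; setting $\alpha:=\beta^{-1}$ we get $\varphi(\alpha(v))=g^{-1}vg$ for all $v\in V(\Gamma)$, i.e. $\varphi(\alpha(\Gamma))=\Gamma^g$, which is the desired conclusion. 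Thus everything comes down to the \textbf{Claim} that $\varphi(\Gamma)$ is contained in a conjugate of $\Gamma$.

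To prove the Claim I would apply Lemma \ref{lem:22kk}: there is a tower $\Gamma=\Gamma_0\le\Gamma_1\le\cdots\le\Gamma_l$ of induced subgraphs of $\Gamma^e$, with each $\Gamma_i$ a double of $\Gamma_{i-1}$ along $\St_{\Gamma_{i-1}}(v_i)$ and $\varphi(\Gamma)\le\Gamma_l$, and then induct on $l$. For $l=0$ we have $\varphi(\Gamma)\le\Gamma$, hence $\varphi(\Gamma)=\Gamma=\Gamma^1$ by the cardinality argument above. For the inductive step, write $\Gamma_l=A\cup_{\St(v_l)}A'$ with $A=\Gamma_{l-1}$, $A'\cong\Gamma_{l-1}$, and $A\cap A'=\St_{\Gamma_{l-1}}(v_l)$. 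The heart of the step is to show that $\varphi(\Gamma)$ lies inside one of the two copies $A$, $A'$. Granting this: if $\varphi(\Gamma)\le A=\Gamma_{l-1}$ we finish by induction; if $\varphi(\Gamma)\le A'$, then conjugating by the element that produced the doubling turns $A'$ into $\Gamma_{l-1}$ and $\varphi$ into another full embedding of $\Gamma$, to which induction applies, and conjugating back keeps us inside a conjugate of $\Gamma$.

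So suppose for contradiction that $\varphi(\Gamma)$ meets both $A\setminus\St(v_l)$ and $A'\setminus\St(v_l)$, say in vertices $b_1$ and $b_2$. Since $\Gamma_l$ has no edge between these two sets and $\varphi(\Gamma)$ is connected (atomic graphs are connected), every path in $\varphi(\Gamma)$ from $b_1$ to $b_2$ passes through $S:=\St_{\Gamma^e}(v_l)\cap\varphi(\Gamma)=\St_{\Gamma_{l-1}}(v_l)\cap\varphi(\Gamma)$; hence $S$ separates $\varphi(\Gamma)$ and $b_1,b_2\notin S$. If $v_l\in\varphi(\Gamma)$, then $S=\St_{\varphi(\Gamma)}(v_l)$ is a closed star of a vertex of the atomic graph $\varphi(\Gamma)\cong\Gamma$, and removing it cannot disconnect $\varphi(\Gamma)$ — contradiction. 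The remaining, and by far most delicate, case is $v_l\notin\varphi(\Gamma)$: here $S\subseteq\lk_{\Gamma^e}(v_l)$, which is an independent set because $\Gamma^e$ is triangle-free (a clique of $\Gamma^e$ is conjugate to a clique of the atomic graph $\Gamma$, so has at most two vertices). Thus $S$ is a separating \emph{independent} set of $\varphi(\Gamma)$, all of whose members have $v_l$ as a common neighbour in $\Gamma^e$; since $\Gamma^e$ has girth at least $5$ (like $\Gamma$), this $v_l$ is their \emph{unique} common neighbour in $\Gamma^e$, so no two members of $S$ have a common neighbour \emph{inside} $\varphi(\Gamma)$, i.e. they are pairwise at distance $\ge 3$ in $\varphi(\Gamma)$. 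The plan is then to reach a contradiction by confronting this with Lemma \ref{lem:26}(2),(4): leaving and re-entering a conjugate of $\Gamma$ forces the exit and entry vertices into a common star of that conjugate, so that — after locating $v_l$ and the elements of $S$ in an appropriate conjugate of $\Gamma$ and analysing the position of $S$ in the tree-like structure of $\Gamma^e$ — a separating independent set whose members are pairwise far apart cannot exist in an atomic graph. Reconciling the separating independent set with atomicity in this sub-case is where the real work lies, and I expect it to be the main obstacle of the proof; the other ingredients (the cardinality reduction, the induction via Lemma \ref{lem:22kk}, the case $v_l\in\varphi(\Gamma)$, and the extraction of $\alpha$) are routine.
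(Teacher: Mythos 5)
Your reduction to the single claim that $\varphi(\Gamma)$ lands inside one conjugate of $\Gamma$ is correct, and the cardinality argument, the extraction of $\alpha$, the induction over the doubling tower of Lemma \ref{lem:22kk}, and the sub-case $v_l\in\varphi(\Gamma)$ (a separating closed star contradicts atomicity) are all fine. But the proof is not complete: the sub-case $v_l\notin\varphi(\Gamma)$, which you yourself flag as ``where the real work lies,'' is precisely the mathematical content of the theorem, and the contradiction you hope to reach there is not available in the form you state it. An atomic graph \emph{can} contain a separating independent set whose members are pairwise at distance at least $3$: the hexagon $C_6$ is atomic (no valence-$1$ vertices, girth $6$, no separating closed stars), and a pair of antipodal vertices is exactly such a set. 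So ``a separating independent set whose members are pairwise far apart cannot exist in an atomic graph'' is false as a standalone statement, and any contradiction must instead exploit the extra datum that all members of $S$ are adjacent in $\Gamma^e$ to the single exterior vertex $v_l$, together with the tree-like structure of $\Gamma^e$ recorded in Lemma \ref{lem:26}. That step is left entirely unproved in your write-up, so as it stands the argument establishes the theorem only in the cases where the separating set happens to be a closed star.

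For comparison, the paper does not run an induction over the doubling tower at all. It fixes a preferred family of cycles in $\Gamma$ coming from a minimal spanning tree, proves (its Claim 2) that if the image of a cycle is not contained in a single conjugate of $\Gamma$ then it is covered by strictly shorter cycles lying in conjugates of $\Gamma$ --- using that stars separate $\Gamma^e$ and that $\Gamma^e$ has no triangles or squares --- so that cycles of minimal length must map into a single conjugate (its Claim 3), and then propagates this over all of $\Gamma$ by a double induction on ``components'' ordered by cycle length and a complexity function. If you want to salvage your route, the missing sub-case will require an analysis of how $\varphi(\Gamma)$ can cross the separating set $\St_{\Gamma^e}(v_l)\cap\varphi(\Gamma)$ that is essentially equivalent to the paper's Claims 1--2; I would recommend proving a statement of that type rather than searching for a purely graph-theoretic obstruction inside the abstract atomic graph $\Gamma$.
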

\begin{proof}

Let $\Gamma$ be an atomic graph and let $n$ be the rank of $\pi_1(\Gamma)$ ($\pi_1(\Gamma)$ is isomorphic to the free group $F_n$ of rank $n$). Recall that since $\Gamma$ is atomic, there are no vertices of valence one. We mark $n$ cycles in $\Gamma$ as follows. If $n=1$, then $\pi_1(\Gamma)$ is isomorphic to $\BZ$ and we mark the only cycle in $\Gamma$. Let $T'$ be a maximal subtree of $\Gamma$ and $\Gamma \setminus T'$ be the set of edges $\{e_1, \dots, e_n\}$. By the length of a cycle $c$, denoted by $|c|$, we mean the length of its core. As usual, if we fix a base point in $T'$, each edge $e_i$ defines a cycle $c_{T', e_i}$ in $\Gamma$, $i=1,\dots, n$. Note that by definition, the length of the cycle defined by an edge $e_i$ is independent of the choice of the base point.

Re-enumerating if necessary, we shall assume that $|c_{T',e_i}| \leq |c_{T',e_j}|$, $1\le i<j\le n$. Every maximal subtree $T'$ of $\Gamma$ defines a tuple $(|c_{T',e_1}|, \dots, |c_{T',e_n}|)$ (independent of the choice of the base point). The natural lexicographical order on the tuples $(|c_{T',e_1}|, \dots, |c_{T',e_n}|)$, induces an order on the set of maximal subtrees of $\Gamma$, namely $T < T'$ if $(|c_{T,e_1}|, \dots, |c_{T,e_n}|)$ is less than $(|c_{T',e_1}|, \dots, |c_{T',e_n}|)$ in the lexicographical order, that is there is $i\in \{1, \dots, n\}$ such that $|c_{T,e_k}| = |c_{T',e_k}|$ for $1\le k < i$ and $|c_{T,e_i}| < |c_{T',e_i}|$. Let $T$ be a minimal (in the above order) maximal subtree of $\Gamma$, i.e. $T \leq T'$ for all maximal subtrees $T'$ of $\Gamma$. We mark the cores of the cycles $c_{e_1,T}, \dots, c_{e_n,T}$. Until the end of the proof of this lemma, unless stated otherwise, by a cycle in $\Gamma$ we mean the core of one of the cycles $\{c_{e_1,T}, \dots, c_{e_n,T}\}$. Note that since the graph $\Gamma$ has no vertices of valence one and since $T$ is a maximal subtree of $\Gamma$, every edge of $\Gamma$ belongs to (the core of at least one of) the cycles $c_{e_i,T}$.

\medskip

Note that two different conjugates of $\Gamma$, say $\Gamma^g$ and $\Gamma^h$, share at most the star of a vertex $u_{g,h}$, that is $u_{g,h} \in V(\Gamma^g)\cap V(\Gamma^h)$ and $\Gamma^g \cap \Gamma^h \subset \St(u_{g,h})$, see Lemma \ref{lem:26}. 

\textit{Claim 1.} If the vertex $u_{g,h}$ belongs to $\varphi(\Gamma)$, then either $\varphi(\Gamma) \cap (\Gamma^g \setminus \Gamma^h) = \emptyset$ or $\varphi(\Gamma) \cap (\Gamma^h \setminus \Gamma^h) = \emptyset$.

Indeed, if $\varphi(\Gamma)$ intersects $\Gamma^g$ and $\Gamma^h$ outside the star $\St(u_{g,h})$, then by Lemma \ref{lem:26}, $\St(u_{g,h})$ separates the extension graph $\Gamma^e$ (and $\Gamma^g$ and $\Gamma^h$ belong to different connected components), so $\St(u_{g,h})$ also separates the image $\varphi(\Gamma) \simeq \Gamma$. However, we assume $\Gamma$ to be atomic and so in particular it does not contain vertices with closed separating stars.

\textit{Claim 2.} Let $c$ be a cycle in $\Gamma$ and assume that $\varphi(c) < \Gamma^e$ is not contained in one conjugate of $\Gamma$, that is $\varphi(c) \cap \Gamma^g \ne \varphi(c)$, for all $g\in \GG(\Gamma)$. Then $\varphi(c)$ is contained in the union of cycles $c_i^{g_i}< \Gamma^{g_i}$, $g_i \in \GG(\Gamma)$ and the length of each $c_i$ is strictly less than the length of $\varphi(c)$.

Indeed, since $\varphi(c)$ is not contained in one conjugate of $\Gamma$, it follows that there exist $v,w \in \varphi(c)$ and $g,h \in \GG(\Gamma)$ such that $v \in \Gamma^g \setminus \Gamma^h$, $w \in \Gamma^h \setminus \Gamma^g$ and $\Gamma^g \cap \Gamma^h\ne \emptyset$. Since  $\Gamma^g \cap \Gamma^h\ne \emptyset$, it follows from Lemma \ref{lem:26} that there exists $u_{g,h}$ such that its star $\St(u_{g,h})$ separates $\Gamma^g$ and $\Gamma^h$ and so separates $\varphi(c)$. Furthermore, from Claim 1 it follows that $u_{g,h} \notin \varphi(c)$. Let $d_1, \dots, d_k$, $k\ge 2$, be the connected components of $\varphi(c) \setminus \St(u_{g,h})$. For each $d_i$, there is a path $p_i$ in the star of $u_{g,h}$ of length at most 2 such that $d_i \cup p_i$ is a cycle $c_i$ in $\Gamma^e$. Since $\Gamma$ is atomic, it follows that there are no squares or triangles in $\Gamma^e$ and so we have that $|c_i|=|d_i\cup p_i| \ge 5$. Furthermore, since $|p_i|\le 2$, it follows that $|d_i| \ge 3$. We conclude that $|c_i| = |d_i|+|p_i| \le |d_i|+2 < |d_1| + \dots + |d_k| = |\varphi(c)| = |c|$, since $k\ge 2$ and $|d_i| \ge 3$.

\textit{Claim 3.} If $\Gamma'$ is a full subgraph of $\Gamma$, $c$ is a cycle of minimal length in $\Gamma$, $\Gamma'$ and $c$ intersect at least in an edge $e$ and $\varphi(\Gamma')$ is contained in a conjugate of $\Gamma$, say $\varphi(\Gamma') <\Gamma^g$, then $\varphi(c)$ is also contained in $\Gamma^g$, i.e. $\varphi(\Gamma'\cup c) < \Gamma^g$. 

Indeed, since $c$ is of minimal length, it follows from Claim 2 that $\varphi(c)$ is contained in one conjugate of $\Gamma$. Assume towards contradiction that $\varphi(c) < \Gamma^h$ and $\Gamma^h \ne \Gamma^g$. Since $\Gamma'$ and $c$ share at least the edge $e$, it follows that $\varphi(e) \in \Gamma^g \cap \Gamma^h$. Since conjugates of $\Gamma$ share at most a star of a vertex, it follows that $u_{g,h}$ is a vertex of $e$ and so belongs to $\varphi(\Gamma)$, contradicting Claim 1. Hence $\varphi(\Gamma' \cup c) < \Gamma^g$, for some $g\in \GG(\Gamma)$.

\medskip

Let $c$ be a cycle in $\Gamma$. Define the neighbourhood $N(c)$ of the cycle $c$ to be the collection of all cycles in $\Gamma$ that share at least one edge with $c$. Recall that by cycles in $\Gamma$ we mean marked cycles. We say that a connected subgraph $S$ of $\Gamma$ is a \emph{component} if it is a union of cycles and has no cut-points. The neighbourhood $N(S)$ of a component $S$ is the union of neighbourhoods of all the cycles which belong to $S$. Note that, by definition, the neighbourhood of a component is itself a component. 

Let $S$ be a component. Given a cycle $c$ in $S$, the complexity $\comp(c)$ of $c$ in $S$ is
$$
\comp(c)=(r_5, \dots, r_M) \in \BN^{M-4},
$$
where $r_l$ is the number of cycles of length $l$ that belong to $N(c)\cap S$, $l=5, \dots M$ and $M=|c_{T,e_n}|$. Define the finite set of complexities of cycles in $S$ as
$$
\mathcal K(S) = \{ \comp(c) \mid \hbox{ c is a cycle in S} \}
$$
and $\mathcal K$ to be the union of complexities $\mathcal K(S)$ over all components $S$.

To a component $S$, we associate the tuple $(m_{l,\comp_i})_{5\le l \le M, \comp_i \in \mathcal K}^S$, where $m_{l,\comp_i}$ is the number of cycles in $S$ of length $l$ and complexity $\comp_i$ (in $S$) ordered lexicographically from minimal to maximal length and from maximal to minimal complexity, that is the position $(k, \comp_i)$ in the tuple is before the position $(k',\comp_i')$ if either $k<k'$ or $k=k'$ and $\comp_i > \comp_i'$. If no confusion arises, we drop the subindices and denote the tuple simpy by $(m_{l,\comp_i})^S$.

The lexicographical order on the tuples $(m_{l,\comp_i})^S$ naturally defines an ordering $\prec$ on the components of $\Gamma$: $S'\prec S$ if an only if $(m_{l,\comp_i})^S < (m_{l,\comp_i})^{S'}$ in the lexicographical order, that is there exists $(l, \comp_j)$ such that $m_{k,\comp_i}(S)=m_{k,\comp_i}(S')$ for all $(k,\comp_i) < (l, \comp_j)$ and $m_{l,\comp_j}(S') < m_{l,\comp_j}(S)$. In this ordering a component $S$ is maximal if it contains the maximal number of cycles of minimal length and maximal complexity. 

\smallskip

Let $c$ be a cycle of minimal length in $\Gamma$. Define the \emph{minimal component} of $c$ to be the maximal connected subgraph $C$ of $\Gamma$ containing $c$ so that $C$ is a union of cycles of minimal length in $\Gamma$ without cut points. Let $\mathcal S_{1}$ be the set of minimal components of cycles of minimal length in $\Gamma$ which are maximal with respect to the order $\prec$. 

We define the components $\mathcal S_q$ recursively as follows. Consider the set $\mathcal T_q= \{N(S_{q-1})\mid S_{q-1}\in \mathcal S_{q-1}\}$ of neighbourhoods of components of $\mathcal S_{q-1}$. Define $\mathcal S_q$ to be the set of maximal components of $\mathcal T_q$ with respect to the order $\prec$. Note that for $q$ large enough we have that $S_q=\Gamma$, for all $S_q\in \mathcal{S}_q$. We prove by induction on $q$ that for all $S_q \in \mathcal S_q$, $\varphi(S_q) < \Gamma^g$, for some $g\in \GG(\Gamma)$.

\textit{Base of induction.} Let $S_1 \in \mathcal S_1$. By definition of $S_1$, recursively applying Claim 3, it follows that $\varphi(S_1)$ is contained in $\Gamma^g$ for some $g\in \GG(\Gamma)$. Since $\varphi$ is a monomorphism of graphs, $\varphi(S_1)$ is a component of $\Gamma^g$. Since $S_1$ is maximal in the sense of $\prec$, since $\varphi(S_1)\subseteq \Gamma^g\simeq \Gamma$ and since $\varphi$ is an embedding, it follows that the complexity of the image $\comp(\varphi(c))$ in $\varphi(S_1)$ is equal to the complexity $\comp(c)$ in $S_1$ and $\varphi(S_1)\in \mathcal S_1 (\Gamma^g)$. In other words, $\varphi$ induces a permutation on the set of components in $\mathcal S_1$ that preserves the complexity of the cycles, i.e. $\prec$-maximal minimal components in $\Gamma$ are mapped by $\varphi$ to $\prec$-maximal minimal components in $\Gamma^e$ and $\varphi$ preserves complexity of cycles from components in $\mathcal S_1$. To simplify the notation, without loss of generality, we shall assume that $g=1$. 

\smallskip

\textit{Step of induction.} To prove the induction step, we proceed by induction on the complexity of the cycles $d$ that belong to $S_q$ but not to $S_{q-1}$, denoted $d \in S_q \setminus S_{q-1}$, to show that the image of $S_q$ under $\varphi$ belongs to $\Gamma$.

Let $d$ be a cycle in $S_q\smallsetminus S_{q-1}$ of minimal length and maximal complexity. By induction hypothesis, we have that $\varphi(S_{q-1})=S_{q-1}' \in \mathcal S_{q-1}$, $S_{q-1}'\subseteq \Gamma< \Gamma^e$. Since $\varphi$ is a graph monomorphism  $\varphi(d) \cap S_{q-1}' = \varphi(d \cap S_{q-1})$ and so $\varphi(d)$ belongs to the neighbourhood of $S_{q-1}'$. Since $d$ has minimal length in $S_q$, it follows from Claim 3 that $\varphi(d)=d'$ belongs to $\Gamma<\Gamma^e$ and so $d'$ belongs $N(S_{q-1}')\cap \Gamma =S_q'$. 

Furthermore, since $S_q$ is $\prec$-maximal in $\mathcal T_q$, $\varphi(d)\subseteq \Gamma$, $\varphi$ is an embedding and since $d$ has maximal complexity in $S_q\smallsetminus S_{q-1}$, it follows that $d'$ has maximal complexity in $N(S_{q-1}')\cap \Gamma$.  We conclude that the bijection $\varphi$ on $S_{q-1}$ extends to a bijection between cycles of minimal length in  $S_q \setminus S_{q-1}$ and $S'_q \setminus S_{q-1}'$ and that cycles of maximal complexity in  $S_q \setminus S_{q-1}$ are mapped to cycles of maximal complexity in $S'_q \setminus S_{q-1}'$. Hence, all cycles of minimal length in the neighbourhood of $\varphi(S_{q-1})$ belong to the image of $\varphi(S_q)$. This proves the base of induction.

Assume by induction that $\varphi$ is a bijection between $S_{q-1}$ and $S_{q-1}'$ as well as between the cycles in $S_q\smallsetminus S_{q-1}$ and $S_q'\smallsetminus S_{q-1}'$ of length less than or equal to $l-1$ and cycles of length $l$ and complexity greater than $K$. Let $c$ be a cycle in $S_q\smallsetminus S_{q-1}$ of length $l$ and maximal complexity less than $K$. It follows from Claim 2, that either $\varphi(c) = c'\in S_q'\smallsetminus S_{q-1}'$ and $\comp(c)=\comp(c')$ or $\varphi(c)$ is contained in the image of shorter cycles and one of them belongs to the neighbourhood of $S_{q-1}'$. However, by induction, all the cycles in $S_q'\smallsetminus S_{q-1}'$ of length less than $l$ or of length $l$ and complexity greater than $K$ are in one-to-one correspondence with the cycles of the same complexity in $S_q\smallsetminus S_{q-1}$ and so in particular, they belong to the image of $\varphi(S_q)$. This implies that the $\varphi(c)$ is not contained in the image of shorter cycles and so we deduce that $\varphi(c) = c'\in S_q'\smallsetminus S_{q-1}'$. Moreover, since $S_q$ is $\prec$-maximal in $\mathcal T_q$, $\varphi(c)\in S_q'\smallsetminus S_{q-1}'$, $\varphi$ is an embedding and since $c$ has maximal complexity (in $S_q$) less than $K$ among all cycles in $S_q\smallsetminus S_{q-1}$, we conclude that $\comp(c') = \comp(c)$ and so we get a bijection between the cycles in $S_q\smallsetminus S_{q-1}$ and $S_q'\smallsetminus S_{q-1}'$ of length $l$ and maximal complexity less than $K$. This finishes the proof of the induction step.

The statement follows since $S_q=\Gamma$ for all large enough $q$.
\end{proof}

\begin{thm} \label{thm:atomic}
Let $\Delta$ and $\Gamma$ be two atomic graphs. Then $\GG(\Delta)$ and $\GG(\Gamma)$ embed into each other if and only if $\Delta=\Gamma$.
\end{thm}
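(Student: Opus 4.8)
\emph{Plan of proof.} The plan is to reduce the claim to Theorem~\ref{lem:atomicembedding} by analysing the map the embeddings induce on the extension graphs and then counting orbits. The ``if'' direction is immediate, so I would concentrate on ``only if''. Assuming $\GG(\Delta)$ and $\GG(\Gamma)$ embed into each other, I would first invoke \cite{KK} (valid since $\Delta,\Gamma$ are atomic) to replace these group embeddings by induced graph embeddings $p\colon\Delta\hookrightarrow\Gamma^{e}$ and $q\colon\Gamma\hookrightarrow\Delta^{e}$, with induced monomorphisms $\rho\colon\GG(\Delta)\hookrightarrow\GG(\Gamma)$ and $\tau\colon\GG(\Gamma)\hookrightarrow\GG(\Delta)$ (so $\rho|_{V(\Delta)}=p$, $\tau|_{V(\Gamma)}=q$). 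Since $\rho$ sends a conjugate of a generator of $\GG(\Delta)$ to an element of $\GG(\Gamma)$ that is again conjugate to a generator, it restricts to an injection $\widehat{p}\colon V(\Delta^{e})\to V(\Gamma^{e})$; this $\widehat{p}$ carries edges to edges, is equivariant for the conjugation actions (letting $\GG(\Delta)$ act on $V(\Gamma^{e})$ through $\rho$), and, because $\rho$ is injective, restricts on $V(\Gamma)$ to an \emph{induced} embedding $\widehat{p}\circ q\colon\Gamma\hookrightarrow\Gamma^{e}$. Define $\widehat{q}$ symmetrically.

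\emph{Step 1: $\widehat{p}$ and $\widehat{q}$ are bijections.} For $S\subseteq V(\Gamma^{e})$ put $S^{\flat}=\{w^{-1}xw\ :\ x\in S,\ w\in\langle S\rangle\}$; this closure operator is monotone and idempotent, and $V(\Gamma^{h})^{\flat}=V(\Gamma^{e})$ for every conjugate $\Gamma^{h}$ of $\Gamma$ in $\Gamma^{e}$ (conjugating the generators of $\GG(\Gamma)^{h}$ by $\GG(\Gamma)^{h}$ recovers every vertex of $\Gamma^{e}$). One checks $\operatorname{im}\widehat{p}=p(V(\Delta))^{\flat}$, so $\operatorname{im}\widehat{p}$ is $\flat$-closed; by Theorem~\ref{lem:atomicembedding} applied to the induced embedding $\widehat{p}\circ q$, its image is a conjugate $\Gamma^{g}$, whence $V(\Gamma^{g})\subseteq\operatorname{im}\widehat{p}$ and therefore $V(\Gamma^{e})=V(\Gamma^{g})^{\flat}\subseteq(\operatorname{im}\widehat{p})^{\flat}=\operatorname{im}\widehat{p}$. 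Thus $\widehat{p}$, and symmetrically $\widehat{q}$, is onto.

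\emph{Step 2: $|V(\Delta)|=|V(\Gamma)|$.} The conjugation action of $\GG(\Delta)$ on $V(\Delta^{e})$ has exactly $|V(\Delta)|$ orbits, one for each generator (distinct generators of a right-angled Artin group are never conjugate). Transporting this action along the equivariant bijection $\widehat{p}$, the subgroup $\rho(\GG(\Delta))\le\GG(\Gamma)$ has $|V(\Delta)|$ orbits on $V(\Gamma^{e})$, while $\GG(\Gamma)$ has $|V(\Gamma)|$; since a subgroup has at least as many orbits as the ambient group, $|V(\Delta)|\ge|V(\Gamma)|$, and by symmetry equality holds. The orbit counts now coinciding, $\rho(\GG(\Delta))$ and $\GG(\Gamma)$ have exactly the same orbits on $V(\Gamma^{e})$; in particular $\rho(\GG(\Delta))$ splits no conjugacy class of a generator of $\GG(\Gamma)$.

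\emph{Step 3: conclusion.} Then $\widehat{p}$ maps the $\GG(\Delta)$-transversal $V(\Delta)$ of $V(\Delta^{e})$ onto a $\GG(\Gamma)$-transversal of $V(\Gamma^{e})$, so $p(V(\Delta))$ meets each conjugacy class of generators of $\GG(\Gamma)$ exactly once; write $p(y)=g_{y}^{-1}\sigma(y)g_{y}$ with $\sigma\colon V(\Delta)\to V(\Gamma)$ a bijection and $g_{y}\in\GG(\Gamma)$. If $y\sim y'$ in $\Delta$ then $[p(y),p(y')]=1$ in $\GG(\Gamma)$; but whenever $a\neq b$ are generators with $a\not\sim b$, no conjugate of $a$ commutes with a conjugate of $b$ --- apply the retraction $\GG(\Gamma)\to\GG(\St_{\Gamma}(a))$, under which $b$, hence every conjugate of $b$, dies, while $C_{\GG(\Gamma)}(a)=\GG(\St_{\Gamma}(a))$ is fixed. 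Hence $\sigma(y)\sim\sigma(y')$ in $\Gamma$, so $\sigma$ is edge-preserving, and by the symmetric argument so is $\sigma^{-1}$; therefore $\sigma$ is a graph isomorphism $\Delta\xrightarrow{\ \sim\ }\Gamma$, i.e.\ $\Delta=\Gamma$.

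The decisive input is Theorem~\ref{lem:atomicembedding}. Granting it, the main obstacle is the argument of Steps~2--3 --- passing from the embeddings to a self-map of the infinite graph $\Gamma^{e}$ and using orbit counts together with the non-commutation of conjugates of distinct generators to upgrade ``same number of vertices'' to ``isomorphic graphs''; the surjectivity of $\widehat{p}$ in Step~1 is a routine consequence of the idempotence of $S\mapsto S^{\flat}$.
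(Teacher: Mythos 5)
Your overall strategy is the right one and matches the paper's: turn the two group embeddings into graph embeddings $p\colon\Delta\to\Gamma^e$, $q\colon\Gamma\to\Delta^e$, extend to maps of extension graphs, and feed a composed self-embedding into Theorem~\ref{lem:atomicembedding}. However, Steps~1--2 contain a genuine gap, and it starts in your setup. The result of \cite{KK} does \emph{not} give you a monomorphism $\rho$ with $\rho|_{V(\Delta)}=p$: it only guarantees that the map $a_i\mapsto (b_{f(i)}^{N})^{w_i}$ is injective for some large $N$, and the ``power-free'' extension of $p$ to a homomorphism may fail to be injective. (Indeed, by the very rigidity you are proving -- cf.\ Corollary~\ref{cor:injendos} -- embeddings between atomic pc groups genuinely involve proper powers.) Once $\rho$ raises generators to powers, it no longer sends conjugates of generators to conjugates of generators, so $\widehat p$ is not simply ``the restriction of $\rho$''; it must be defined separately by $a_i^{u}\mapsto b_{f(i)}^{w_i\rho(u)}$, as the paper does with $\varphi'$. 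With that corrected definition, $\operatorname{im}\widehat p=\{x^{h}: x\in p(V(\Delta)),\ h\in\rho(\GG(\Delta))\}$, which is \emph{not} $p(V(\Delta))^{\flat}$, because $\rho(\GG(\Delta))$ is generated by the $N$-th powers of $p(V(\Delta))$ rather than by $p(V(\Delta))$ itself. And $\widehat p$ is genuinely not surjective in general: take $\Delta=\Gamma=C_5$ and the injective endomorphism $v\mapsto v^{2}$; the mod-$2$ exponent-sum homomorphism killing all generators except $w$ shows that $C(v)\cdot\rho(\GG(\Gamma))\neq\GG(\Gamma)$ for $w\notin\St(v)$, so the vertex $v^{w}$ is not in the image. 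Consequently Step~1 fails, and with it the orbit count of Step~2: $\rho(\GG(\Delta))$ has $|V(\Delta)|$ orbits on $\operatorname{im}\widehat p$ but may have infinitely many on $V(\Gamma^{e})$, so you cannot conclude $|V(\Delta)|\ge|V(\Gamma)|$.

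The repair is to drop surjectivity and orbit counting altogether and use Theorem~\ref{lem:atomicembedding} the way the paper does. Build $\psi'\colon\Gamma^{e}\to\Delta^{e}$ from the \emph{powered} monomorphism (so it is well defined and injective on vertices), and consider the induced embedding $\psi'\circ p\colon\Delta\to\Delta^{e}$. By Theorem~\ref{lem:atomicembedding} its image is a conjugate $\Delta^{g}$ and the induced map on vertex labels, $i\mapsto g(f(i))$, is an automorphism of $\Delta$; in particular $f$ is injective, giving $|V(\Delta)|\le|V(\Gamma)|$ directly, and since $p$ is a graph embedding and adjacency in $\Gamma^{e}$ forces adjacency of the underlying labels (your retraction argument in Step~3 is a correct proof of this), $\Delta$ is a subgraph of $\Gamma$ via $f$. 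Symmetrizing with $q$ gives $\Gamma$ as a subgraph of $\Delta$, whence $\Delta\cong\Gamma$. Your Step~3 is essentially this final argument and is sound once the bijectivity of the label map is obtained from Theorem~\ref{lem:atomicembedding} rather than from the flawed Steps~1--2.
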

\begin{proof}
Since atomic graphs are triangle-free, it follows from \cite{KK} that $\GG(\Delta) < \GG(\Gamma)$  (resp. $\GG(\Gamma) < \GG(\Delta)$) if and only if there is a graph embedding $\varphi:\Delta < \Gamma^e$ (resp. $\psi:\Delta < \Gamma^e$). 

Let $V(\Delta)=\{a_1,\dots, a_l\}$ and $V(\Gamma)=\{b_1,\dots, b_n\}$. Let 
$$
\varphi: a_i\mapsto b_{f(i)}^{w_i(b_1,\dots, b_k)}
$$ 
and 
$$
\psi: b_j\mapsto a_{g(j)}^{v_j(a_1,\dots,a_l)},
$$
where $w_i\in \GG(\Gamma)$, $v_j\in \GG(\Delta)$, $i=1,\dots, l$, $j=1,\dots, n$. We observe that $\varphi$ and $\psi$ induce homomorphisms $\bar \varphi:\GG(\Delta)\to \GG(\Gamma)$ and $\bar \psi:\GG(\Gamma)\to \GG(\Delta)$. By \cite{KK}, there exists $N\in \BN$ depending only on $\Gamma$ and $\Delta$ so that the homomorphisms $\varphi^*$ and $\psi^*$ induced by the maps 
$$
a_i\mapsto {(b_{f(i)}^{N})}^{w_i} \hbox{ and } b_j\mapsto {(a_{g(j)}^{N})}^{v_j}
$$
correspondingly, are group monomorphisms. 

Furthermore, the maps $\bar \varphi$ and $\bar \psi$ naturally induce embeddings $\varphi':\Delta^e\to \Gamma^e$ and $\psi':\Gamma^e\to \Delta^e$ as follows. In order to define $\varphi'$ and $\psi'$ it suffices to determine the images of the vertices of the extension graph. By definition of the extension graph, its vertices are labelled by $a_i^{u_i}$ and $b_j^{q_j}$, correspondingly. Set
$$
\varphi': a_i^{u_i}\mapsto b_{f(i)}^{\varphi^*(u_i)} \quad \hbox{and} \quad \psi': b_j^{q_j}\mapsto a_{g(j)}^{\psi^*(q_j)}.
$$
Note that $\varphi'$ and $\psi'$ are graph embeddings, since  so are $\varphi$ and $\psi$ and since $\varphi^*$ and $\psi^*$ are group embeddings. 

By Theorem \ref{lem:atomicembedding}, the embedding $\varphi\psi'$ of $\Delta$ into $\Delta^e$ is unique up to conjugacy and graph automorphism. It follows that if $i_1\ne i_2$, then $a_{g(f(i_1))}\ne a_{g(f(i_2))}$. Hence, we have that $f(i_1)\ne f(i_2)$ and we conclude that $|V(\Delta)|\le |V(\Gamma)|$. Moreover, since $\varphi$ is a graph embedding, it follows that if $(a_{i_1}, a_{i_2})$ is an edge of $\Delta$, then $(b_{f(i_1)}, b_{f(i_2)})$ is an edge of $\Gamma$. We conclude that $\Delta$ is a subgraph of $\Gamma$. 

An analogous argument for $\psi\varphi'$ shows that $\Gamma$ is a subgraph of $\Delta$. Therefore, $\Delta$ and $\Gamma$ are isomorphic graphs.
\end{proof}

\bigskip

Often, the quasi-isometric rigidity of the group is closely related with the group being co-Hopfian - a property which can be viewed as some type of algebraic rigidity. For instance, one of the consequences of Mostow rigidity is that irreducible lattices in semisimple Lie groups are co-Hopfian (with the exception of free groups).  Recall that a group is termed co-Hopfian if it satisfies the following equivalent conditions:
\begin{itemize}
\item It is not isomorphic to any proper subgroup;
\item Every injective endomorphism of the group is an automorphism.
\end{itemize} 

Besides this classical example, there are many other interesting co-Hopfian groups: non-elementary freely indecomposable torsion-free word-hyperbolic groups; the fundamental groups of finite volume pinched negatively curved manifolds of dimension greater than 2; closed aspherical manifolds $M$ with $i(M) \ne 0$, where $i$ is a homotopy invariant of closed manifolds which is multiplicative under finite covers (e.g. Euler characteristic, signature, simplicial volume, or $L^2$-Betti number); finitely generated torsion-free nilpotent groups with a specific automorphism group of their Lie algebra, etc (see \cite{B} and references there). Our next goal is to show that the quasi-isometric rigidity of the class of atomic pc groups is also related to the co-Hopfian property of the $\BQ$-completion of these groups.

\medskip

Let us first show that injective endomorphisms of an atomic pc group are rigid, that is, any embedding $\varphi: \GG(\Gamma) \to \GG(\Gamma)$ is the identity up to an automorphism and taking powers of the generators. 

\begin{cor}\label{cor:injendos}
Let $\Gamma$ be an atomic graph and $\psi: \GG(\Gamma) \to \GG(\Gamma)$ an injective endomorphism. Then, there exist $g\in \GG(\Gamma)$, $\sigma \in \Aut(\Gamma)$ and $k_v \in \BZ \setminus {0}$ such that for all $v\in V(\Gamma)$, we have that
$$
\psi(v) = g^{-1}  \sigma(v)^{k_v} g, v\in V(\Gamma).
$$
In other words, $\psi$ is, up to conjugacy, graph automorphism and taking powers, the identity endomorphism.
\end{cor}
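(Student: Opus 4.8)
The plan is to reduce the statement to Theorem~\ref{lem:atomicembedding}. The first task is to show that, for each vertex $v$, the element $\psi(v)$ is a non-zero power of a conjugate of a generator, i.e.\ a power of a vertex of the extension graph $\Gamma^e$; once this is done, one checks that the map $\Phi$ sending $v$ to the root of $\psi(v)$ is a full subgraph embedding $\Phi\colon\Gamma\hookrightarrow\Gamma^e$, and Theorem~\ref{lem:atomicembedding} identifies $\Phi$, up to conjugacy and a graph automorphism, with the standard copy of $\Gamma$ inside $\Gamma^e$, which is exactly the assertion.

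For the first task the key input is the description of centralizers in partially commutative groups (see \cite{C} and the references therein): up to conjugacy, $Z_{\GG(\Gamma)}(g)$ is a direct product of infinite cyclic groups, one for each join factor of the support $A$ of the cyclically reduced core of $g$, together with $\GG(\mathrm{lk}(A))$, where $\mathrm{lk}(A)$ is the set of common neighbours of $A$. Here the atomic hypotheses enter: since $\Gamma$ has no triangles and no squares, $\mathrm{lk}(A)$ has at most one vertex whenever $|A|\ge 2$, and every induced join in $\Gamma$ is an edge or a star; hence $Z_{\GG(\Gamma)}(g)$ is abelian (isomorphic to $\BZ$ or $\BZ^2$) \emph{unless} $A$ is a single vertex $v$, in which case $Z_{\GG(\Gamma)}(g)\cong\langle v\rangle\times\GG(\mathrm{lk}(v))\cong\BZ\times F_{\deg v}$ with $\deg v\ge 2$, because $\Gamma$ has no valence-$1$ vertices (and $\mathrm{lk}(v)$ is independent as $\Gamma$ is triangle-free). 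Now, for each vertex $v$ the subgroup $\langle\psi(u)\mid u\in\mathrm{lk}(v)\rangle$ is free of rank $\deg v\ge 2$ (as $\mathrm{lk}(v)$ is independent and $\psi$ is injective) and lies in $Z_{\GG(\Gamma)}(\psi(v))$; so $\psi(v)$ is conjugate to a non-zero power of a generator, its root $t_v$ is a well-defined vertex of $\Gamma^e$ (roots being unique in partially commutative groups), say $t_v=g_v^{-1}\phi(v)g_v$ with $\phi(v)\in V(\Gamma)$, and $\psi(v)=t_v^{k_v}$ for some $k_v\in\BZ\setminus\{0\}$.

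For the second task, put $\Phi(v)=t_v$. This map is injective: if $t_v=t_{v'}$ with $v\ne v'$, then $\psi(v)$ and $\psi(v')$ both lie in the infinite cyclic group $\langle t_v\rangle$, so $\psi(v^{k_{v'}})=\psi((v')^{k_v})$, forcing $v^{k_{v'}}=(v')^{k_v}$ in $\GG(\Gamma)$, which is impossible in the abelianisation since $k_v,k_{v'}\ne 0$. It preserves and reflects adjacency, using that $Z_{\GG(\Gamma)}(a^m)=Z_{\GG(\Gamma)}(a)$ for $m\ne 0$: if $v\sim v'$ then $\psi(v')\in Z(\psi(v))=Z(t_v)$, hence $t_v\in Z(\psi(v'))=Z(t_{v'})$, i.e.\ $(t_v,t_{v'})\in E(\Gamma^e)$; conversely, if $t_v$ and $t_{v'}$ commute, so do their powers $\psi(v)$ and $\psi(v')$, whence $[v,v']=1$ and $v\sim v'$ (as $v\ne v'$). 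Thus $\Phi$ is a full subgraph embedding $\Gamma\hookrightarrow\Gamma^e$, and by Theorem~\ref{lem:atomicembedding} there exist $g\in\GG(\Gamma)$ and $\sigma\in\Aut(\Gamma)$ with $t_v=\Phi(v)=g^{-1}\sigma(v)g$ for all $v$; therefore $\psi(v)=t_v^{k_v}=g^{-1}\sigma(v)^{k_v}g$.

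The main obstacle is the first task: extracting, from the bare injectivity of $\psi$, that each $\psi(v)$ is a power of a conjugate of a generator. This is precisely where all three defining properties of atomic graphs are used — absence of triangles and squares forces the centralizer of anything not conjugate to a power of a vertex to be abelian, while absence of valence-$1$ vertices guarantees that the centralizer of a conjugate of a vertex genuinely contains a non-abelian free group, so that the two cases can be told apart by the presence of $\langle\psi(u)\mid u\in\mathrm{lk}(v)\rangle$ inside $Z_{\GG(\Gamma)}(\psi(v))$.
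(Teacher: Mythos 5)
Your proposal is correct and follows essentially the same route as the paper's proof: identify each $\psi(v)$ as a conjugate of a power of a generator via the structure of non-abelian centralisers in an atomic pc group, check that the roots span an induced copy of $\Gamma$ in $\Gamma^e$, and apply Theorem~\ref{lem:atomicembedding}. You spell out some steps the paper leaves implicit (notably that $\psi(\GG(\mathrm{lk}(v)))\le C(\psi(v))$ forces the centraliser of $\psi(v)$ to be non-abelian, and the verification that $v\mapsto t_v$ is a full embedding), but the argument is the same.
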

\begin{proof}
Let $v\in V(\Gamma)$. Since $\Gamma$ is an atomic graph, it follows that the centraliser $C(v)$ of $v$ is isomorphic to $\BZ \times F$, where $F$ is a free group of rank greater than or equal to $2$. On the other hand, it follows from the description of centralisers in pc groups, that if $C(h)$ is isomorphic to $\BZ \times F$, where $F$ is a free group of rank greater than or equal to $2$, $h\in \GG(\Gamma)$, then $h$ is conjugate of a power of a generator, that is $h = (v^k)^g$, for $k \in \BZ$, $g\in \GG(\Gamma)$. Therefore, for all $v\in V(\Gamma)$, we have that $\psi(v) = (w_v^{k_v})^{g_v}$, where $w_v \in V(\Gamma)$, $k_v \in \BZ$ and $g_v \in \GG(\Gamma)$. Since $\psi$ is an embedding, we have that $\{ w_v^{g_v} \mid v\in V(\Gamma) \}$, viewed as vertices of $\Gamma^e$, span an induced subgraph isomorphic to $\Gamma$ and so the map $v \to w_v^{g_v}$ induces a graph embedding $\varphi$ from $\Gamma$ to the extension graph $\Gamma^e$. By Theorem \ref{lem:atomicembedding}, up to conjugacy and graph automorphism there is only one graph embedding induced by the identity map and so $w_v^{g_v}= w_v^{g}$ for all $v \in V(\Gamma)$ and $v \to w_v$ induces a graph automorphism $\sigma \in \Aut(\Gamma)$. We conclude that
$$
\psi(v) = g^{-1}  \sigma(v)^{k_v} g, v\in V(\Gamma)
$$
for some $g\in \GG(\Gamma)$, $\sigma \in \Aut(\Gamma)$ and $k_v \in \BZ \setminus {0}$.
\end{proof}

In the context of the curve complex of a surface, this type of rigidity for embeddings from pc groups to modular groups was proven by Aramayona and Souto in \cite{AS}, see also \cite{AL}. More precisely, the authors show that for some rigid finite sets $X$ of the curve complex $\mathcal C(S)$ of a surface $S \ne S_{1,2}$, every injective homomorphism from the pc group $\GG(X)$ into the modular group $Mod^{\pm}(S)$ is obtained, up to conjugation, by taking powers of roots of Dehn twists in the vertices of $X$.

\smallskip

Corollary \ref{cor:injendos} shows that the only reason why atomic pc groups are not co-Hopfian is that the injective endomorphisms induced by the maps $v\to v^{k_v}$ that send generators to proper powers of themselves are not automorphisms. We now pass to a divisible extension of the atomic pc group, its $\BQ$-completion, to assure that these injective endomorphisms induce automorphisms there and show that the $\BQ$-completions of atomic pc groups are co-Hopfian. 

\medskip

One of the classical theorems in the theory of abelian groups asserts that every abelian group can be embedded into a divisible abelian group. An analogous result for torsion-free locally nilpotent groups was proven by Mal'cev. Since then, many mathematicians such as Kontorivich, Hall and Baumslag, have studied the classes of groups for which there always exists the $n$-th root of an element as well as groups for which such $n$-th root is unique. In this context, and further developing ideas of Lyndon, Miasnikov and Remeslennikov showed that free groups also embed into a divisible group - the free $\BQ$-group, and described its algebraic structure, see \cite{MR}.

In the same spirit, we prove in \cite{CDK2} that every pc group $\GG(\Gamma)$ embeds into a divisible group and that the category of divisible $\GG(\Gamma)$-groups has an initial object, the $\BQ$-completion $\GG(\Gamma)^{\BQ}$ of $\GG(\Gamma)$. Furthermore, as in the case of free groups, the $\BQ$-completion $\GG(\Gamma)^{\BQ}$ can be described algebraically as an iterated sequence of extensions of centralisers of elements. 

Since the construction of the $\BQ$-completion is technically involved, for readers not familiar with this notion, we consider an intermediate group, denoted by $\mathcal G(\Gamma, \BQ)$, which is a subgroup of the $\BQ$-completion $\GG(\Gamma)^\BQ$ and the smallest extension of $\GG(\Gamma)$ which is co-Hopfian (when the graph $\Gamma$ is atomic). Given a simplicial graph $\Gamma$, we define the group $\mathcal G(\Gamma, \BQ)$ as the graph product with underlying graph $\Gamma$ and vertex groups isomorphic to $\BQ$. Note that we view $\BQ$ as a divisible abelian group, that is, in exponential notation, we have that for all $n\in \BN$ and for all $x\in \BQ$, there exists $y\in \BQ$ such that $y^n=x$. Moreover, we identify elements $q$ from the vertex group $\BQ$ associated to $v$ with $v^q$ and this way, the \emph{ring} $\BQ$ has a natural action on the vertex groups: given $p\in \BQ$ (viewed as a ring) and $v^q \in \BQ$ (an element of the vertex group), we define the action of $p$ on $v^q$ as $v^{pq}$. The defined group is not divisible, as only products of pair-wise commuting elements of the vertex groups have $n$-th roots for all $n \in \BN$. It is not difficult to see that $\GG(\Gamma) < \mathcal{G}(\Gamma, \BQ) < \GG(\Gamma)^{\BQ}$.

\begin{cor}
Let $\Gamma$ be an atomic graph. Then $\mathcal G(\Gamma, \BQ)$ {\rm(}resp. the $\BQ$-completion $\GG(\Gamma)^\BQ$ of $\GG(\Gamma)${\rm)} is co-Hopfian.
\end{cor}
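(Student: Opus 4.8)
The plan is to show that every injective endomorphism $\Psi$ of $\mathcal{G}(\Gamma,\BQ)$ (resp.\ of $\GG(\Gamma)^{\BQ}$) becomes, after composing with an inner automorphism, a graph automorphism and a ``rescaling'' of the vertex $\BQ$-subgroups, the identity. Since such rescalings -- the sole obstruction to co-Hopfianity at the level of $\GG(\Gamma)$, by Corollary~\ref{cor:injendos} -- are now \emph{invertible}, multiplication by a nonzero rational being an automorphism of $\BQ$, this yields the statement. For $\mathcal{G}(\Gamma,\BQ)$ I would deduce this directly from Corollary~\ref{cor:injendos} by a ``bounded denominators'' reduction; for $\GG(\Gamma)^{\BQ}$ I would argue along the same lines, replacing that reduction by the tree-like geometry of a rational extension graph.

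For $\mathcal{G}=\mathcal{G}(\Gamma,\BQ)$: given an injective endomorphism $\Psi$, any element of $\mathcal{G}$ has a normal form involving only finitely many rational exponents, hence lies in the subgroup $H_N\le\mathcal{G}$ generated by the subgroups $\frac{1}{N}\BZ\le\BQ$ of the vertex groups, for $N$ a suitable common denominator. As $V(\Gamma)$ is finite, one $N$ works for all the $\Psi(v)$, $v\in V(\Gamma)$, so $\Psi(\GG(\Gamma))\subseteq H_N$. Now $H_N\cong\mathcal{G}(\Gamma,\BZ)=\GG(\Gamma)$ via the isomorphism $\iota$ sending the generator $v$ of each vertex group to $v^{1/N}$, so $\iota^{-1}\circ\Psi|_{\GG(\Gamma)}$ is an injective endomorphism of $\GG(\Gamma)$; Corollary~\ref{cor:injendos} then provides $g\in\GG(\Gamma)$, $\sigma\in\Aut(\Gamma)$ and $k_v\in\BZ\setminus\{0\}$ with $\iota^{-1}(\Psi(v))=g^{-1}\sigma(v)^{k_v}g$, i.e.\ $\Psi(v)=h^{-1}\sigma(v)^{r_v}h$ with $h=\iota(g)$ and $r_v=k_v/N\in\BQ\setminus\{0\}$. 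Since the vertex groups $\BQ_v$ generate $\mathcal{G}$, it remains to determine each $\Psi|_{\BQ_v}$. As $v^q$ commutes with $v$, the element $\Psi(v^q)$ lies in $C_{\mathcal{G}}(\Psi(v))=h^{-1}\bigl(\BQ_{\sigma(v)}\times\mathcal{G}(\St_\Gamma(\sigma(v))\smallsetminus\{\sigma(v)\},\BQ)\bigr)h$, the centraliser of a nontrivial vertex element in a graph product being the vertex group times the graph product on its link; writing $\Psi(v^q)=h^{-1}(\sigma(v)^{s}\ell)h$ with $\ell$ in the torsion-free second factor and comparing $b$-th powers ($q=a/b$) against $\Psi(v^q)^b=\Psi(v)^a$ forces $\ell=1$ and $s=r_vq$. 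Hence $\Psi=\mathrm{inn}_h\circ\widehat{\sigma}\circ\rho$, where $\widehat{\sigma}$ is the graph automorphism induced by $\sigma$ and $\rho$ acts on $\BQ_v$ by $v^q\mapsto v^{r_vq}$; each factor is an automorphism of $\mathcal{G}(\Gamma,\BQ)$ -- $\rho$ because multiplication by $r_v\ne 0$ is an automorphism of $\BQ$ -- so $\Psi$ is one.

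For $\GG(\Gamma)^{\BQ}$ the ``bounded denominators'' trick fails, and I would use centralisers together with the rigidity Theorem~\ref{lem:atomicembedding}. The centraliser in $\GG(\Gamma)^{\BQ}$ of a vertex $v$ is the $\BQ$-completion of $C_{\GG(\Gamma)}(v)=\BZ\times F_m$, where $m$ is the valence of $v$ (at least $2$, as $\Gamma$ is atomic), namely $\BQ\times F_m^{\BQ}$ with $F_m^{\BQ}$ the free $\BQ$-group of rank $m$; invoking the description of centralisers in $\GG(\Gamma)^{\BQ}$ from \cite{CDK2}, an element whose centraliser contains such a subgroup is conjugate into a vertex $\BQ$-subgroup, so $\Psi(\BQ_v)\subseteq\BQ_{w_v}^{g_v}$ for some $w_v\in V(\Gamma)$, $g_v\in\GG(\Gamma)^{\BQ}$. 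The conjugates of vertex $\BQ$-subgroups, joined by an edge when they commute, form a graph $\Gamma^e_{\BQ}$ containing $\Gamma^e$; for $\Gamma$ atomic one expects $\Gamma^e_{\BQ}$ to retain the separation properties of Lemma~\ref{lem:26} (stars of vertices disconnect it, two conjugates of $\Gamma$ meet in at most a star, and so on), and granting this the proof of Theorem~\ref{lem:atomicembedding} applies verbatim and shows that the induced embedding $v\mapsto\BQ_{w_v}^{g_v}$ of $\Gamma$ into $\Gamma^e_{\BQ}$ is the standard one up to conjugacy in $\GG(\Gamma)^{\BQ}$ and a graph automorphism $\sigma$. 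As before, uniqueness of roots in $\GG(\Gamma)^{\BQ}$ then writes $\Psi$ as $\mathrm{inn}_h\circ\widehat{\sigma}\circ\rho$ with $\rho$ a rescaling of the vertex $\BQ$-subgroups by nonzero rationals, whence $\Psi$ is an automorphism.

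The hard part is the geometric input in the last case: extracting from \cite{CDK2} that the centraliser of a vertex in $\GG(\Gamma)^{\BQ}$ is exactly $\BQ\times F_m^{\BQ}$, that elements with such centralisers are conjugate into vertex $\BQ$-subgroups, and -- most delicately -- that for atomic $\Gamma$ the rational extension graph $\Gamma^e_{\BQ}$ still satisfies the analogue of Lemma~\ref{lem:26}, so that the rigidity argument of Theorem~\ref{lem:atomicembedding} transfers to it. The $\mathcal{G}(\Gamma,\BQ)$ case circumvents all of this and is essentially a formal corollary of Corollary~\ref{cor:injendos}, which is why I would present it in full and treat $\GG(\Gamma)^{\BQ}$ by the analogous route.
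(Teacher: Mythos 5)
Your proposal is correct, and for $\mathcal G(\Gamma,\BQ)$ it takes a genuinely different (and arguably cleaner) route than the paper. The paper's proof treats both groups uniformly: it characterises the elements with non-abelian centraliser as conjugates of vertex-group elements ($\BQ\times(\BQ^{(1)}\ast\cdots\ast\BQ^{(n)})$, resp.\ $\BQ\times F^{\BQ}$) and then declares the rest ``analogous to the proof of Corollary~\ref{cor:injendos}'', i.e.\ it reruns the centraliser-plus-rigidity argument inside the larger group. Your bounded-denominators reduction instead pulls the whole question back to an honest injective endomorphism of $\GG(\Gamma)$ itself, to which Corollary~\ref{cor:injendos} applies verbatim; the subsequent centraliser computation pinning down $\Psi$ on the full vertex groups $\BQ_v$ (the comparison of $b$-th powers forcing $\ell=1$ and $s=r_vq$) is sound, and the closing observation --- that the rescalings $v^q\mapsto v^{r_vq}$, the sole obstruction to co-Hopfianity over $\BZ$, become invertible over $\BQ$ --- is exactly the point of the corollary. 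What this buys you is a proof of the $\mathcal G(\Gamma,\BQ)$ case that never needs the rigidity of Theorem~\ref{lem:atomicembedding} for anything larger than $\Gamma^e$. For $\GG(\Gamma)^{\BQ}$ your route coincides with the paper's, and the step you explicitly flag as unverified --- that the separation properties of Lemma~\ref{lem:26} persist for the ``rational'' extension graph whose vertices are conjugates by elements of $\GG(\Gamma)^{\BQ}$, so that the argument of Theorem~\ref{lem:atomicembedding} transfers --- is precisely the step the paper leaves implicit; you supply no less detail there than the published argument does, and you are more candid about where the remaining work lies.
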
  
\begin{proof}
It suffices to notice that if the centraliser $C(h)$ is non-abelian if and only if it is isomorphic to $\BQ \times F$, where $F$ is the free product $\BQ^{(1)} \ast \cdots \ast \BQ^{(n)}$, $n \ge 2$ if and only if $h$ is a conjugate of an element of a vertex group, that is $h=g^{-1} v^q g$, where $v^q$ belongs to the vertex group $v$, $q\in \BQ$ and $g\in \mathcal G(\Gamma, \BQ)$.

For the $\BQ$-completion $\GG(\Gamma)^{\BQ}$, we have that the centraliser $C(h)$ is non-abelian if and only if $C(h)\simeq \BQ \times F^{\BQ}$, where $F^{\BQ}$ is the $\BQ$-completion of a (non-abelian) free group if and only if $h$ is a conjugate of a $\BQ$-power of a generator, that is $h= g^{-1} {(v^q)} g$, where $v \in V(\Gamma)$, $q\in \BQ$, and $g\in \GG(\Gamma)^{\BQ}$.

Now proof is analogous to the proof of Corollary \ref{cor:injendos}.
\end{proof}

Note that for arbitrary graphs, the groups $\mathcal{G}(\Gamma, \BQ)$ and $\GG(\Gamma)^{\BQ}$ are far from being co-Hopfian. It suffices to take $\Gamma$ to be the edgeless graph with more than one vertex or, for a connected example, take $\Gamma$ to be a path of length greater than 2.

\smallskip

We can re-formulate these observations in yet another way, as follows.

\begin{cor}
Let $\Gamma$ be an atomic graph. Then each injective endomorphism $\psi: \GG(\Gamma) \to \GG(\Gamma)$ extends to automorphisms $\psi'$  of $\mathcal G(\Gamma, \BQ)$ and $\psi''$ of $\GG(\Gamma)^{\BQ}$, i.e. one has the following commutative diagram:
$$
\begin{CD}
\GG(\Gamma)    @>>> \mathcal G(\Gamma, \BQ) @>>> \GG(\Gamma)^{\BQ} \\
@V{\psi}VV       @V{\psi'}VV        @V{\psi''}VV\\
\GG(\Gamma)  @>>> \mathcal G(\Gamma, \BQ) @>>> \GG(\Gamma)^{\BQ} 
\end{CD}
$$
\end{cor}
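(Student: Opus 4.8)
The plan is to reduce this corollary to Corollary \ref{cor:injendos} together with the universal (initial-object) property of the $\BQ$-completion and the analogous universal behaviour of $\mathcal G(\Gamma,\BQ)$. First I would take an injective endomorphism $\psi:\GG(\Gamma)\to\GG(\Gamma)$ and apply Corollary \ref{cor:injendos} to write $\psi(v)=g^{-1}\sigma(v)^{k_v}g$ for suitable $g\in\GG(\Gamma)$, $\sigma\in\Aut(\Gamma)$ and nonzero integers $k_v$. The key observation is that each of the three building blocks of $\psi$ extends to both $\mathcal G(\Gamma,\BQ)$ and $\GG(\Gamma)^{\BQ}$: conjugation by $g$ extends trivially since $\GG(\Gamma)\le\mathcal G(\Gamma,\BQ)\le\GG(\Gamma)^{\BQ}$; the graph automorphism $\sigma$ extends because both constructions are functorial in the graph (a graph automorphism permutes vertex groups and their pairwise-commuting structure, hence induces automorphisms of the graph product $\mathcal G(\Gamma,\BQ)$ and of $\GG(\Gamma)^{\BQ}$); and the power map $v\mapsto v^{k_v}$ extends because in $\mathcal G(\Gamma,\BQ)$ and $\GG(\Gamma)^{\BQ}$ the vertex groups are divisible, so $v^{k_v}$ has a well-defined inverse operation $x\mapsto x^{1/k_v}$ on the vertex group of $v$, and this gives an automorphism of that vertex group which again extends by functoriality.

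Concretely, I would first define $\psi'':\GG(\Gamma)^{\BQ}\to\GG(\Gamma)^{\BQ}$ on generators by $\psi''(v^q)=g^{-1}\sigma(v)^{k_v q}g$ for $v\in V(\Gamma)$, $q\in\BQ$, and check it respects all defining relations of $\GG(\Gamma)^{\BQ}$: commutation relations are preserved because $\sigma$ is a graph automorphism, and compatibility with the $\BQ$-action on centralisers is preserved because multiplication by $k_v$ is a $\BQ$-linear automorphism of $\BQ$. To see $\psi''$ is surjective (hence, by Corollary above applied to $\GG(\Gamma)^{\BQ}$ being co-Hopfian, an automorphism), I would exhibit an explicit inverse: the map $w\mapsto g\,\sigma^{-1}(w)^{1/k_{\sigma^{-1}(w)}}\,g^{-1}$ on vertex-group generators, which is well-defined precisely because $\GG(\Gamma)^{\BQ}$ is divisible and $k_v\neq 0$. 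Restricting $\psi''$ to $\mathcal G(\Gamma,\BQ)$, one checks it preserves that subgroup (it sends each vertex group $\BQ$ for $v$ into a conjugate of the vertex group $\BQ$ for $\sigma(v)$, hence into $\mathcal G(\Gamma,\BQ)$), giving $\psi'$; and restricting to $\GG(\Gamma)$ recovers $\psi$ by construction, which gives commutativity of the diagram. Alternatively, once one knows $\psi'$ is an automorphism of $\mathcal G(\Gamma,\BQ)$, the extension $\psi''$ to $\GG(\Gamma)^{\BQ}$ follows from the initial-object (universal) property: $\psi'$ composed with $\mathcal G(\Gamma,\BQ)\hookrightarrow\GG(\Gamma)^{\BQ}$ is a morphism of $\GG(\Gamma)$-groups into a divisible group, hence factors uniquely through $\GG(\Gamma)^{\BQ}$.

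The main obstacle I anticipate is verifying that the power map genuinely extends to a well-defined endomorphism of these divisible groups rather than just to a set map on generators — one must confirm that $v\mapsto v^{k_v}$, with possibly different exponents $k_v$ at different vertices, respects the commutation relations (this is fine, since powers of commuting elements commute) and, more delicately, that it is compatible with the divisibility/centraliser-extension structure defining $\GG(\Gamma)^{\BQ}$. Because $\GG(\Gamma)^{\BQ}$ is built by iterated centraliser extensions, one should argue by induction along that construction that at each stage the map extends, using that the exponent $k_v$ acts as an automorphism of the relevant $\BQ$ (or $\BQ$-vector-space) factor. The functoriality of $\mathcal G(\Gamma,\BQ)$ in $\Gamma$, and of $\GG(\Gamma)^{\BQ}$ in $\GG(\Gamma)$, I would cite from \cite{CDK2}; granting those, the remaining verifications are routine relation-checking, and the co-Hopfian conclusion is exactly the previous corollary.
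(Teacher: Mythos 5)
Your proposal is correct and follows exactly the route the paper intends: the paper gives no explicit proof of this corollary, treating it as an immediate reformulation of Corollary \ref{cor:injendos} via the observation that conjugation, the graph automorphism $\sigma$, and the power maps $v\mapsto v^{k_v}$ each extend to automorphisms of $\mathcal G(\Gamma,\BQ)$ and $\GG(\Gamma)^{\BQ}$ (the latter being divisible), and your decomposition-plus-functoriality argument is precisely that. One small caveat: your single-formula inverse $w\mapsto g\,\sigma^{-1}(w)^{1/k_{\sigma^{-1}(w)}}\,g^{-1}$ only inverts $\psi''$ up to conjugation (since it does not fix $g$), so the cleaner justification of bijectivity is the one you already give, namely that $\psi''$ is a composite of three maps each of which is individually an automorphism.
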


\section{Concluding remarks}

The main motivation that brought us to formulate Conjecture \ref{conj:qi->EGE} comes, in fact, from a stronger statement about asymptotic cones of pc groups. The study of asymptotic geometry of pc groups and, in particular, the structure of their asymptotic cones, led us to believe that asymptotic cones of pc groups can be classified up to bilipschitz equivalence in terms of their extension graphs. More precisely, we raise the following question.

\begin{question}\label{ques:ac}
Given simplicial graphs $\Gamma$ and $\Delta$, is it true that the asympotic cones of $\GG(\Gamma)$ and $\GG(\Delta)$ are bilipschitz equivalent if and only if $\Gamma < \Delta^e$ and $\Delta < \Gamma^e$?
\end{question}

Since asymptotic cones of quasi-isometric pc groups are bilipschitz equivalent, beside being extremely interesting on its own right, Conjecture \ref{conj:qi->EGE} can be viewed as a weaker formulation and an ideal test-case of Question \ref{ques:ac}.

\bigskip

As we mentioned in the introduction, Question \ref{ques:qi->emb} has a positive answer when we restrict to commensurable pc groups. Recall that two groups $G$ and $G'$ are (abstractly) commensurable if they have isomorphic finite index subgroups, that is there exists $H<_{fi} G$ and $H'<_{fi} G'$ such that $H\simeq H'$. In particular, groups that are commensurable are quasi-isometric. Although the following result is easy to prove, it is worth mentioning and we record it as a lemma.

\begin{lemma}\label{lem:commensurable}
If the pc groups $\GG(\Delta)$ and $\GG(\Gamma)$ are commensurable, then $\GG(\Gamma)< \GG(\Delta)$ and $\GG(\Delta)< \GG(\Gamma)$.
\end{lemma}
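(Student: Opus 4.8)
The plan is to show that any pc group $\GG(\Gamma)$ embeds into any finite-index subgroup of itself, so that an isomorphism between finite-index subgroups $H \le_{fi} \GG(\Gamma)$ and $H' \le_{fi} \GG(\Delta)$ produces the desired embeddings in both directions. The key observation is that the generators of $\GG(\Gamma)$, after replacing each by a suitable power, land in any given finite-index subgroup while retaining all the relations (and no more). First I would fix $H \le_{fi} \GG(\Gamma)$ of index $m$. For each vertex $v \in V(\Gamma)$, the element $v^{m!}$ lies in $H$: indeed the cyclic subgroup $\langle v \rangle$ meets $H$ in a subgroup of index dividing $m$, hence containing $v^{m!}$ (in fact $v^{k_v}$ for $k_v \mid m$, but $m!$ is a uniform choice). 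I then claim the assignment $v \mapsto v^{m!}$ extends to an injective homomorphism $\GG(\Gamma) \to \GG(\Gamma)$. It is a homomorphism because whenever $[u,v]=1$ in $\GG(\Gamma)$ we have $[u^{m!},v^{m!}]=1$; injectivity is the standard fact that the "power map" $v \mapsto v^{k}$ on a pc group is injective — this can be seen, for instance, because $\GG(\Gamma)$ embeds in its $\BQ$-completion (or already in $\mathcal G(\Gamma,\BQ)$) where such a map extends to an automorphism, or directly from the normal-form/centraliser theory of pc groups. The image of this homomorphism is generated by elements of $H$, hence is contained in $H$, so we obtain $\GG(\Gamma) \hookrightarrow H$.

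Granting this, the lemma follows quickly. Suppose $\GG(\Delta)$ and $\GG(\Gamma)$ are commensurable, witnessed by $H \le_{fi} \GG(\Gamma)$, $H' \le_{fi} \GG(\Delta)$ and an isomorphism $\theta: H \to H'$. By the previous paragraph there is an embedding $\GG(\Gamma) \hookrightarrow H$; composing with $\theta$ and the inclusion $H' \hookrightarrow \GG(\Delta)$ gives $\GG(\Gamma) < \GG(\Delta)$. Symmetrically, an embedding $\GG(\Delta) \hookrightarrow H'$ composed with $\theta^{-1}$ and $H \hookrightarrow \GG(\Gamma)$ gives $\GG(\Delta) < \GG(\Gamma)$.

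The only real content — and the step I expect to need the most care — is the injectivity of $v \mapsto v^{k}$ (equivalently, that a pc group embeds into a finite-index subgroup of itself via a power map). For a general finitely generated group the power map need not even be a homomorphism, but for pc groups the defining relations are commutators of generators, which are preserved; injectivity then rests on the torsion-freeness and the good behaviour of roots in pc groups. I would cite this from the preliminaries on pc groups (or deduce it from the existence of the embedding $\GG(\Gamma) < \mathcal G(\Gamma,\BQ)$ discussed above, where $v \mapsto v^k$ is the restriction of the automorphism $v^q \mapsto v^{kq}$ of the vertex groups, hence injective). Everything else is routine index arithmetic and composition of maps.
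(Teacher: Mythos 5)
Your proposal is correct and follows essentially the same route as the paper: both arguments find a uniform power $N$ (your $m!$) so that the $N$-th powers of the generators land in the given finite-index subgroups, invoke the fact that these powers generate a copy of the same pc group (the paper cites Koberda's theorem for the injectivity of the power map, which you justify via the $\BQ$-completion/normal form), and then compose with the isomorphism of finite-index subgroups in both directions.
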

\begin{proof}
Assume that $\GG(\Delta)$ and $\GG(\Gamma)$ are commensurable. By definition, there exist finite index subgroups $H <_{fi} \GG(\Gamma)$, $K<_{fi} \GG(\Delta)$ such that $H \simeq K$. Since the subgroups $H$ and $K$ are of finite index, there exists $N\in \BN$ so that the subgroup generated by the $N$-th powers of the generators of $\GG(\Delta)$ (resp. of $\GG(\Gamma)$) is a subgroup of $H$ (resp. of $K$): 
$$
\begin{array}{l}
\langle x_1^N, \dots, x_n^N \mid x_i \in V(\Gamma) \rangle < H\\
\langle y_1^N, \dots, y_m^N \mid y_i \in V(\Delta) \rangle < K. %
\end{array}
$$

By \cite{K}, the subgroup $\langle x_1^N, \dots, x_n^N \mid x_i \in V(\Gamma) \rangle$ is isomorphic to  $\GG(\Gamma)$ and $\langle y_1^N, \dots, y_m^N \mid y_i \in V(\Delta) \rangle$ is isomorphic to $\GG(\Delta)$. Using these isomorphisms we have that $\GG(\Delta) \hookrightarrow H \simeq K < \GG(\Gamma)$ and vice-versa. Therefore, if $\GG(\Gamma)$ is commensurable to $\GG(\Delta)$, then $\GG(\Gamma)< \GG(\Delta)$ and $\GG(\Delta)< \GG(\Gamma)$.
\end{proof}

\smallskip

Since Question \ref{ques:qi->emb} has a positive answer for commensurable pc groups, one may wonder why we state Conjecture \ref{conj:qi->EGE} using embeddability in the extension graph, rather than just embeddability of groups as in Question \ref{ques:qi->emb}. The motivation comes from the study of pc groups $\GG(\Gamma)$ and $\GG(\Delta)$ for which there is an embedding of $\GG(\Delta)$ to $\GG(\Gamma)$ but no graph embeddings of $\Delta$ to the extension graph $\Gamma^e$, see \cite{CDK}. The nature of such embeddings indicates that the corresponding groups $\GG(\Delta)$ and $\GG(\Gamma)$ are not commensurable (in fact, they seem to be not quasi-isometric). This brought us to believe that if Question \ref{ques:qi->emb} has a positive answer, then so does Conjecture \ref{conj:qi->EGE}. Therefore, a good starting point to check if Conjecture \ref{conj:qi->EGE} is indeed a consequence of Question \ref{ques:qi->emb} would be to answer the following question, which is also of an independent interest.

\begin{question}\label{quest:commensurability}
If $\GG(\Delta)$ and $\GG(\Gamma)$ are commensurable {\rm(}and so $\GG(\Delta)< \GG(\Gamma)$ and $\GG(\Gamma)< \GG(\Delta)${\rm)}, does it follow that $\Delta <\Gamma^e$ and $\Gamma < \Delta^e$?
\end{question}

\smallskip

In some cases, for instance if the graph $\Gamma$ (or $\Delta$) is triangle-free, triangle-built, a tree or the complement of a tree, the embeddability between pc groups is equivalent to the graph embeddability in to the extension graph, that is $\GG(\Delta) < \GG(\Gamma)$ and $\GG(\Gamma)<\GG(\Delta)$ if and only if $\Delta < \Gamma^e$ and $\Gamma < \Delta^e$, see \cite{KK}, \cite{CDK} and \cite{C}. Therefore, in these cases, Question \ref{quest:commensurability} has a positive answer. Moreover, if one further assumes $\Gamma$ to be a triangle– and square-free graph without any degree-one or degree-zero vertex, then if $\GG(\Delta)$ and $\GG(\Gamma)$ are commensurable, we have that the corresponding extension graphs are isomorphic, i.e. $\Delta^e \simeq \Gamma^e$, see \cite[Proposition 7]{KK2}.

\bigskip

In the cases of quasi-isometric classification of pc groups that we analyse, we show not only that Conjecture \ref{conj:qi->EGE} holds, but also so does its converse. However, we would like to stress that graph embeddability into the extension graph is not a sufficient condition to assure quasi-isometry of the corresponding groups. One obvious obstruction is that the graphs $\Gamma$ and $\Delta$ need to be both either connected or disconnected since the number of ends is a quasi-isometry invariant, see \cite{P}. Similarly, since the cyclic JSJ is also a quasi-isometry invariant, one can find connected examples for which the converse of Conjecture \ref{conj:qi->EGE} does not hold: consider the connected union of two cycles $C_5$ of length 5, where the cycles only share a vertex $p$ and denote this graph by $C_5 \vee_p C_5$. It is easy to show that $C_5 \vee_p C_5$ embeds into the extension graph of $C_5$ and vice-versa. However, the corresponding pc groups are not quasi-isometric since $C_5 \vee_p C_5$ has a non-trivial JSJ-decomposition over the infinite cyclic group generated by $p$, while $C_5$ does not, see \cite{Clay}.

\medskip

Finally, we also mentioned that a positive answer to Conjecture \ref{conj:qi->EGE} would imply that coherence is a quasi-isometric invariant in the class of pc groups: if the pc group $\GG(\Gamma)$ is coherent and the pc group $\GG(\Delta)$ is quasi-isometric to $\GG(\Gamma)$, then $\GG(\Delta)$ is coherent. This is a consequence of the following remark.

\begin{remark}\label{rem:coherent}
In \cite{DromsCoherence}, Droms gives a graph theoretical characterisation of the class of coherent pc groups: a pc group $\GG(\Gamma)$ is coherent if and only if $\Gamma$ is chordal, i.e. $\Gamma$ does not contain cycles of length greater than or equal to $4$. If $\Gamma$ is chordal, then, as shown in \cite{KK}, the extension graph $\Gamma^e$ and any induced subgraph of $\Gamma^e$ are also chordal.  
\end{remark}

\end{document}